\newtheorem{theorem}{Theorem}[section]
\newtheorem{lemma}[theorem]{Lemma}
\newtheorem{cor}[theorem]{Corollary}
\providecommand{\keywords}[1]
{
  \small	
  {\textit{Keywords: }} #1
}
\begin{document}
\title{Determinants of pseudo-laplacians and $\zeta^{({\rm reg})}(1)$ for spinor bundles over Riemann surfaces}
\author{Alexey Kokotov\thanks {\href{mailto:alexey.kokotov@concordia.ca}{\nolinkurl{alexey.kokotov@concordia.ca}}}, Dmitrii Korikov\thanks {\href{mailto:dmitrii.v.korikov@gmail.com}{\nolinkurl{dmitrii.v.korikov@gmail.com}}}.}
\maketitle
\begin{abstract}
Let $P$ be a point of a compact Riemann surface $X$. We study self-adjoint extensions of the Dolbeault Laplacians in hermitian line bundles $L$ over $X$ initially defined on sections with compact supports in $X\backslash\{P\}$. We define the $\zeta$-regularized determinants for these operators and derive comparison formulas for them. We introduce the notion of the Robin mass of $L$. This quantity enters the comparison formulas for determinants and is related to the regularized $\zeta(1)$ for the Dolbeault Laplacian. For spinor bundles of even characteristic, we find an explicit expression for the Robin mass. In addition, we propose an explicit formula for the Robin mass in the scalar case. Using this formula, we describe the evolution of the regularized $\zeta(1)$ for scalar Laplacian under the Ricci flow. As a byproduct, we find an alternative proof for the Morpurgo result that the round metric minimizes the regularized $\zeta(1)$ for surfaces of genus zero.
\end{abstract}

\keywords{Riemann surfaces, self-adjoint extensions, Dolbeaut Laplacians, Robin mass.}

\section{Introduction}

Let $X$ be a compact Riemann surface of genus $g$ endowed with smooth conformal metric $\rho^{-2}|dz|^2$ and let $L$ be a holomorphic line bundle over $X$ with smooth hermitian metric $h$. The Dolbeault Laplacian $\Delta$ acts on smooth sections of $L$ by
\begin{equation}
\label{Dol}
\Delta u=-4\rho^2 h^{-1}\partial (h\overline{\partial}u).
\end{equation}
Its closure in $L_2(X;L)$ is a self-adjoint operator, also denoted by $\Delta$. 

Let $x,y,z$ denote holomorphic local coordinates on $X$ and let $x\mapsto u(x)$ denote the representative of a section $u$ in a local coordinate $x$. Let $x,y\mapsto G(x,y)$ be the Green function (section of $L_x\hat{\otimes}\overline{L_y}$) of $\Delta$ and $x,y\mapsto\mathsf{1}(x,y)$ be a smooth section of $L_x\hat{\otimes}L^{-1}_{y}$ obeying $\mathsf{1}(x,x)=1$.

Chose a point $P$ of $X$. Introduce the operator $\dot{\Delta}$ as the $L_2(X;L)$-closure of operator (\ref{Dol}) defined on smooth sections of $L$ with compact supports in $\dot{X}=X\backslash\{P\}$. In Section \ref{SA EXT sec}, we prove that the operators $\Delta_\alpha$ ($\alpha\in(-\pi/2,\pi/2]$) acting via
\begin{equation}
\label{Domains act}
\Delta_\alpha u=\Delta\big(u-c_u(y)h(y)G(\cdot,y){\rm sin}\alpha\big) \qquad (y=y(P))
\end{equation}
on the domains
\begin{equation}
\label{Domains}
{\rm Dom}\Delta_\alpha=\big\{u=c_u(y)\big(h(y)G(\cdot,y){\rm sin}\alpha+\mathsf{1}(\cdot,y){\rm cos}\alpha\big)+\tilde{u} | \ y=y(P), c_u\in \Gamma(X;L), \tilde{u}\in {\rm Dom}\dot{\Delta}\big\}
\end{equation}
are all the self-adjoint extensions of $\dot{\Delta}$ while $\Delta_0\equiv \Delta$ is the Friedrichs extension. 

This statement extends the result of Colin de Verdi\`ere \cite{CdV} who dealt with case of trivial bundle $L$ with $h=1$. Following \cite{CdV}, we call the operators $\Delta_\alpha$ ($\alpha\ne 0$) {\it pseudo-laplacians}. The (scalar) pseudo-laplacians arise as rigorous counterparts of the formal operators $\Delta+\epsilon\delta_{P}$ (where $\delta_{P}$ is the Dirac measure at $P$ and $\epsilon\in\mathbb{R}$, see \cite{BFad} and Chapter III,4 \cite{Albe}) in the models of point scattering of quantum particles first introduced by Enrico Fermi \cite{Fermi}. The equation $\Delta_\alpha u=\lambda u$ (in the scalar case) describes motion of a quantum particle on the surface in the presence of a point scatterer (S\v{e}ba billiard, see \cite{Seba}).

Our main goal is to study the $\zeta$-regularized determinants of $\Delta_\alpha$. In Section \ref{Comp F sec}, we derive comparison formulas for the determinants of $\Delta_\alpha$ and $\Delta$. From now on, we assume that $L$ admits no non-trivial holomorphic sections, $h^0(L)=0$; then ${\rm Ker}\Delta=\{0\}$. Except for the case $\alpha=0$, the derivative of zeta-function $s\mapsto\zeta(s|\Delta_\alpha)$ has logarithmic singularity at $s=0$. For this reason, we apply the following regularization (proposed and discussed in a similar context by Kirsten, Loya, and Park, see \cite{KLP})
\begin{equation}
\label{reg zeta}
{\rm det}^{(r)}\Delta_\alpha={\rm exp}\big(-\partial_s \zeta^{(r)}(s|\Delta_\alpha)\big)|_{s=0}, \qquad \zeta^{(r)}(s|\Delta_\alpha)=\zeta(s|\Delta_\alpha)+s{\rm log}s
\end{equation}
for the determinants of pseudo-laplacians $\Delta_\alpha$. We prove that 
\begin{align}
\label{comparison formula determinants}
\frac{{\rm det}^{(r)}\Delta_\alpha}{{\rm det}\Delta}=-4\pi e^\gamma{\rm ctg}\alpha. 
\end{align}
where $\gamma$ is the Euler constant. 

It might seem that the dependence of ${\rm det}^{(r)}\Delta_\alpha/{\rm det}\Delta$ on the surface $(X,\rho^{-2})$, the bundle $(L,h)$ and the point $P$ in formula (\ref{comparison formula determinants}) is trivial. However, such a dependence is included implicitly in our parameterization (\ref{Domains act}), (\ref{Domains}) of pseudo-laplacians (i.e. in the way to assign a number $\alpha$ to a self-adjoint extension of $\dot{\Delta}$). Parameterization (\ref{Domains act}), (\ref{Domains}) is coordinate independent in the sense that the pseudo-laplacian is described in terms of the invariantly (and globally) defined Green section $G$.  

One can parametrize the pseudo-laplacians in a purely local way by describing the asymptotics of sections from their domains near $P$. However, such a parametrization is obtained at the cost of the loss of coordinate independence (except of the case of the trivial bundle). Namely, let $x$ be a holomorphic coordinate in the neighborhood of $P$ and let $y=x(P)$. Introduce the operator $\Delta^{(\beta)}$ acting as
\begin{equation}
\label{domain via asymptotics act}
\Delta^{(\beta)}u=-4\rho^2 h^{-1}\partial (h\overline{\partial}u) \text{ in } \dot{X}
\end{equation}
on all the sections $u$ of $L$ that are locally $H^2$-smooth outside $P$ and admit the asymptotics
\begin{equation}
\label{domain via asymptotics}
u(x)={\rm cos}\beta+{\rm sin}\beta\Big[-\frac{1}{2\pi}{\rm log}|x-y|+\frac{\partial_y{\rm log} h(y)}{4\pi}(x-y){\rm log}(\overline{x-y})\Big]+\tilde{u}
\end{equation}
near $P$, where $\tilde{u}$ is $H^2$-smooth in a neighborhood of $P$ and $\tilde{u}(y)=0$. Then $\Delta^{(\beta)}$ with $\beta\in (-\pi/2,\pi/2)$ are all the self-adjoint extensions of $\dot{\Delta}$.

To describe the relation between different parametrizations $\Delta_\alpha$, $\Delta^{(\beta)}$ of the pseudo-laplacian, let us recall that the Green function $G$ admits the asymptotics 
\begin{equation}
\label{Green asymp}
\begin{split}
h(y)G(x,y)=-&\frac{1}{2\pi}{\rm log}\,d(x,y)+ m(y)+o(1)=\\
=-&\frac{1}{2\pi}{\rm log}\,|x-y|+\frac{1}{2\pi}{\rm log}\rho(y)+ m(y)+o(1) \qquad (x\to y)
\end{split}
\end{equation}
where $d(x,y)$ is the distance between $x$ and $y$ in the metrics $\rho^{-2}|dz|^2$. In the case of trivial bundle $L$ with $h=1$, the coefficient $m(y)$ in (\ref{Green asymp}) is called the {\it Robin mass} at $y$ (see, e.g., \cite{Nev,Steiner,Okikiolu}). Similarly, we call $m(y)$ in (\ref{Green asymp}) the {\it Robin mass} at $y$ associated with the Riemannian manifold $(X,\rho^{-2})$ and the hermitian line bundle $(L,h)$. (Note that $y\mapsto m(y)$ is a scalar function on $X$, cf. p.199, \cite{Nev}.) Comparing (\ref{Domains}) and (\ref{domain via asymptotics}) by using (\ref{Green asymp}), we obtain
\begin{equation}
\label{parametrization comparing}
\Delta^{(\beta)}=\Delta_\alpha \ \Longleftrightarrow \ {\rm ctg}\beta={\rm ctg}\alpha+m(P)+\frac{1}{2\pi}{\rm log}\rho(y). 
\end{equation}
Thus, formula (\ref{comparison formula determinants}) can be rewritten as
\begin{align}
\label{comparison formula determinants coordepend}
\frac{{\rm det}^{(r)}\Delta^{(\beta)}}{{\rm det}\Delta}=-4\pi e^\gamma\Big({\rm ctg}\beta-m(P)-\frac{1}{2\pi}{\rm log}\rho(y)\Big). 
\end{align}
Here the pseudo-laplacian is defined in purely local terms (such as the metrics and their derivatives at $P$ in coordinate $x$) while the dependence of ${\rm det}^{(r)}\Delta^{(\beta)}/{\rm det}\Delta$ on $(X,\rho^{-2})$, $(L,h)$, $P$ becomes explicit due to the presence of the Robin mass $m(P)$ in the right-hand side. Equality (\ref{comparison formula determinants coordepend}) extends the result obtained for the case of trivial bundle $L$ in \cite{AissHillKok}, Theorem 1. Note that, in the case of trivial bundle $L$, $\lambda=0$ is an eigenvalue of $\Delta$ ($h^0(L)=1$) and the analogue of formula (\ref{comparison formula determinants}) is less interesting.

To make formula (\ref{comparison formula determinants coordepend}) completely explicit one needs to calculate the Robin mass $m(P)$. In Section \ref{sec Robin} we compute $m(P)$ for holomorphic line bundles $L$ obeying
\begin{equation}
\label{bundle restrictions}
{\rm deg}(L)=g-1, \quad h^0(L)=0.
\end{equation}
In particular, for a generic surface $X$, this includes the case of spin-$\frac{1}{2}$ bundles with even characteristics. In the latter case, formula (\ref{comparison formula determinants coordepend}) becomes completely explicit since ${\rm det}\Delta$ in the left-hand side is related to the scalar Laplacian via the Bost--Nelson bosonization formula (see \cite{BN}) while plenty of explicit formulas for determinants of scalar Laplacians are available. 

Note that each $L$ obeying (\ref{bundle restrictions}) is isomorphic to ${\bm\triangle}\otimes\chi$, where ${\bm\triangle}$ is the basic (i.e. with characteristic $(0,0)$) spinor bundle while $\chi$ is a unitary holomorphic line bundle (see Example 2.3 on pp.28,29, \cite{Fay}). We prove the following formula (for the case $g\ge 2$)
\begin{align}
\label{Robin mass explicit}
\begin{split}
m(x)=\frac{1}{4\pi^2}\int\limits_{X}\left[\Big|\frac{\theta[\chi]\big(\mathcal{A}(y-x)\big)}{\theta[\chi](0)E(x,y)}\Big|^2 
\frac{h(x)}{h(y)}-\Big|\partial_y{\rm log}\left(\frac{F(x,y)}{\rho(x)\rho(y)}\right)\Big|^2\right.+&\\
+\left.\Big(\frac{K(y)}{4\rho^{2}(y)}-\pi(\overline{\vec{v}(y)})^t (\Im\mathbb{B})^{-1}\vec{v}(y)\Big){\rm log}\left(\frac{F(x,y)}{\rho(x)\rho(y)}\right)\right]&\hat{d}y.
\end{split}
\end{align}
Here $\hat{d}y=d\overline{y}\wedge dy/2i$, $E(x,y)$ is the prime-form of $X$, $\theta(\chi)$ is the theta-function (defined in \cite{Fay}, (1.9)), $\vec{v}=(v_1,\dots,v_g)^t$ is the basis of Abelian differentials on $X$ normalized with respect to a chosen canonical basis of cycles, $\mathbb{B}$ is the matrix of $b$-periods of $X$, $\mathcal{A}(\mathscr{D})$ denotes the Abel transform of the divisor $\mathscr{D}$ on $X$, and $K(y)=[4\rho^2\partial\overline{\partial}{\rm log}\rho](y)$ is the Gaussian curvature of the metric $\rho^{-2}|dz|^2$ at $y$. The symmetric section 
\begin{equation}
\label{F section}
F(x,y)={\rm exp}\Big[-2\pi\Im\mathcal{A}(x-y)^t (\Im\mathbb{B})^{-1}\Im\mathcal{A}(x-y)\Big]|E(x,y)|^2
\end{equation}
of $|K_x|^{-1}\hat{\otimes}|K_y|^{-1}$ has been introduced by E. and H. Verlinde (see \cite{Ver}), formula (5.10). Note that
\begin{equation}
\label{Green Verlinde}
G^{(sc)}(x,y)=\frac{1}{2}\big(m^{(sc)}(x)+m^{(sc)}(y)\big)-\frac{1}{4\pi}{\rm log}\left(\frac{F(x,y)}{\rho(x)\rho(y)}\right),
\end{equation}
where $G^{(sc)}(x,y)$ and $m^{(sc)}$ are the scalar Green function and the Robin mass, respectively (see \cite{Ver}, formula (5.7)). Thus, as emphasized in \cite{Ver}, $F(x,y)$ can be considered as a conformally invariant part of the scalar Green function. 

Note that the integrand in the right-hand side of (\ref{Robin mass explicit}) contains two nonintegrable terms terms whose singularities (of order $|x-y|^{-2}$) cancel. Thus, the whole integrand has only integrable singularity (of order $O(|x-y|^{-1})$).

It should be noted that the Robin mass and the zeta function $s\mapsto \zeta(s|\Delta)$ of the Laplacian $\Delta$ are related as follows. Recall that $s\mapsto \zeta(s|\Delta)$ has a simple pole at $s=1$. One can define the regularized $\zeta(1|\Delta)$ as
\begin{equation}
\label{reg zeta 1}
\zeta^{(r)}(1|\Delta)=\lim_{s\to 1}\Big(\zeta(s|\Delta)-\frac{{\rm Area}(X;\rho)}{4\pi(s-1)}\Big).
\end{equation}
Then 
\begin{equation}
\label{ADM via Robin}
\zeta^{(r)}(1|\Delta)=\int\limits_{X}m(x)dS_\rho(x)+\frac{\gamma-{\rm log}2}{2\pi}{\rm Area}(X;\rho),
\end{equation}
where $dS_\rho(x)=\rho^{-2}(x)d\overline{x}\wedge dx/2i$ is the volume form on $X$ and ${\rm Area}(X;\rho)$ is the area of $X$ in the metric $\rho^{-2}|dz|^2$. Formula (\ref{ADM via Robin}) is derived in \cite{Steiner}, Proposition 2 for the case of the scalar Laplacian $-4\rho^2\partial\overline{\partial}$. In Section \ref{Steiner sec} we extend the proof of (\ref{ADM via Robin}) to the general case by making use of the results of \cite{Fay}. 

Expressions (\ref{F section}), (\ref{Green Verlinde}) turn out to be useful for the study of $\zeta^{(r)}(1|\Delta)$ in the case of the scalar Laplacian $\Delta=\Delta^{(sc)}=-4\rho^2\partial\overline{\partial}$. In Section \ref{Sec Rissi}, we derive explicit formula (\ref{scalar Robin}) for the Robin mass $m^{(sc)}$ in the scalar case. Using this formula, we describe the evolution (given by equations (\ref{Ricci evolution of RM}) and (\ref{ADM via Robin})) of $\zeta^{(r)}(1|\Delta^{(sc)})$ for scalar Laplacian under the Ricci flow. In the genus zero case, we prove that $\zeta^{(r)}(1|\Delta^{(sc)})$ is non-increasing under the Ricci flow. As a byproduct, we find an alternative proof for the Morpurgo result that the round metrics minimizes the regularized $\zeta^{(r)}(1|\Delta^{(sc)})$ for surfaces of genus zero.

\section{Pseudo-laplacians}
\label{SA EXT sec}
In this section, we describe all the self-adjoint extensions of $\dot{\Delta}$. 

First, let us describe the domain of $\dot{\Delta}$. Let $\{U_k,z_k\}_{k}$ be a finite biholomorphic atlas on $X$ and let $\{\phi_k\}$ be a (smooth) partition of unity on $X$ subordinate to the open cover $\{U_k\}_k$. We assume that $U_1$ is a neighborhood of $P$, $z_1(P)=0$, and the support of $\phi_1$ is sufficiently small. In what follows, we denote $\xi^1=\Re z_1$, $\xi^2=\Im z_1$, and $r=|z_1|$. Introduce the Sobolev space $H^l(X;L)$ of sections of $L$ with finite norms
\begin{equation}
\label{Sobolev norm}
\|u\|_{H^l(X;L)}=\left(\sum_{k}\|\ \acute{u}_k\|^2_{H^l(\mathbb{C})}\right)^{\frac{1}{2}}, \qquad \acute{u}_k(z_k):=\phi_k(z_k) u(z_k)
\end{equation}
(here $H^l(\mathbb{C})$ is the usual Sobolev space and $\acute{u}_k=0$ outside $z_k(U_k)$). 

Let us recall well-known properties of $H^l(X;L)$. Smooth sections of $L$ are dense in any $H^l(X;L)$. Since operator (\ref{Dol}) is elliptic, the $H^2(X;L)$-norm is equivalent to the graph norm 
\begin{equation}
\label{Greph norm}
\|u\|_{\Delta}=\big(\|\Delta u\|^2_{L_2(X;L)}+\|u\|^2_{L_2(X;L)}\big)^{\frac{1}{2}}
\end{equation}
of $\Delta$. The embedding $H^2(X;L)\subset C(X;L)$ is continuous. In view of the last property, sections $u\in H^2(X;L)$ vanishing at $P$ constitute the subspace $H_0^2(\dot{X};L)$ in $H^2(X;L)$. Let $\dot{X}=X\backslash\{P\}$, let $C^{\infty}_0(\dot{X};L)$ be the space of all smooth sections of $L$ vanishing at $P$, and let $C^{\infty}_c(\dot{X};L)$ be the space of all smooth sections of $L$ with compact supports in $\dot{X}$. 
\begin{lemma}
\label{H2 space lemma}
$C^{\infty}_c(\dot{X};L)$ is dense in $H_0^2(\dot{X};L)$.
\end{lemma}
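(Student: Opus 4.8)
The plan is to show that any $u \in H^2_0(\dot X; L)$ can be approximated in the $H^2(X;L)$-norm by sections in $C^\infty_c(\dot X;L)$, proceeding in two stages: first replacing $u$ by a smooth section vanishing near $P$, then cutting off away from $P$. Since smooth sections are dense in $H^2(X;L)$ and the evaluation-at-$P$ functional $u \mapsto u(P)$ is continuous on $H^2(X;L)$ (by the continuous embedding $H^2(X;L) \subset C(X;L)$), one can first approximate $u$ by smooth sections $v_n \to u$ in $H^2$; then $v_n(P) \to u(P) = 0$, and after subtracting a fixed smooth section $w$ with $w(P)=1$ times the scalar $v_n(P)$ one gets smooth sections $v_n - v_n(P)\,w \in C^\infty_0(\dot X;L)$ still converging to $u$ in $H^2$. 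So it suffices to prove that a \emph{smooth} section $v$ with $v(P)=0$ can be $H^2$-approximated by smooth sections supported away from $P$.

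For that second stage I would use a logarithmic cutoff in the local coordinate $z_1$ near $P$, exploiting that in dimension two the function $\log r$ (more precisely $\log(1/r)$ suitably truncated) has $H^1$-norm that can be made small on shrinking annuli — this is the standard fact that points have zero $H^1$-capacity in $\mathbb{R}^2$. Concretely, pick a smooth function $\chi:[0,\infty)\to[0,1]$ with $\chi \equiv 0$ near $0$ and $\chi\equiv 1$ for argument $\geq 1$, and set $\eta_\varepsilon(z_1) = \chi\!\big(\tfrac{\log r - \log\varepsilon^2}{\log\varepsilon^{-1}}\big)$ type bump (or more simply interpolate between $r=\varepsilon^2$ and $r=\varepsilon$), so that $\eta_\varepsilon$ vanishes near $P$, equals $1$ outside a small disc, and $\|\nabla \eta_\varepsilon\|_{L_2} = O(1/\sqrt{\log\varepsilon^{-1}}) \to 0$, with $\|\nabla^2\eta_\varepsilon\|$ also controlled on the annulus. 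Then $\eta_\varepsilon v \in C^\infty_c(\dot X;L)$, and I must estimate $\|v - \eta_\varepsilon v\|_{H^2} = \|(1-\eta_\varepsilon)v\|_{H^2}$. The zeroth- and first-order terms go to zero because $(1-\eta_\varepsilon)$ is supported on a shrinking set and $v,\nabla v$ are bounded; the delicate second-order term is $\|(\nabla^2\eta_\varepsilon)\, v\|_{L_2}$, and here one uses $v(P)=0$ together with $v$ smooth to write $|v(z_1)| \leq C r$ on the support of $\nabla^2\eta_\varepsilon$, which compensates the growth of the second derivatives of $\eta_\varepsilon$ — a Hardy-type inequality on the annulus $\varepsilon^2 < r < \varepsilon$. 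The cross terms $(\nabla\eta_\varepsilon)(\nabla v)$ are handled similarly, using boundedness of $\nabla v$ and smallness of $\|\nabla\eta_\varepsilon\|_{L_2}$.

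The main obstacle is the second-order estimate: a naive cutoff ($\eta_\varepsilon$ linear in $r$ on an annulus $\varepsilon < r < 2\varepsilon$) gives $\|\nabla^2\eta_\varepsilon\|_{L_2} = O(\varepsilon^{-1})$ and $|v| = O(r) = O(\varepsilon)$ there, so the product is only $O(1)$, not $o(1)$. One must therefore use the logarithmic-scale cutoff so that the "bad" annulus is $\{\varepsilon^2 < r < \varepsilon\}$ over which the total variation $1$ of $\eta_\varepsilon$ is spread, making both $\|\nabla\eta_\varepsilon\|_{L_2}$ and $\|r\,\nabla^2\eta_\varepsilon\|_{L_2}$ of order $(\log\varepsilon^{-1})^{-1/2}$; combined with the Taylor bound $|v|\leq Cr$ and $|\nabla v| \leq C$, every term in $\|(1-\eta_\varepsilon)v\|_{H^2}$ is then $o(1)$. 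Finally I would note that everything is local — only the chart $(U_1,z_1)$ around $P$ is affected, all other partition-of-unity pieces $\acute u_k$ for $k\neq 1$ are untouched — so the global $H^2(X;L)$-norm defined by \eqref{Sobolev norm} is controlled chart-by-chart, and the equivalence with the elliptic graph norm is not even needed here; this completes the approximation and hence the lemma.
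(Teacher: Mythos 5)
Your proposal is correct and follows essentially the same route as the paper: first reduce to smooth sections vanishing at $P$ by subtracting $v_n(P)$ times a fixed section equal to $1$ at $P$, then kill the section near $P$ with a cutoff spread over a logarithmically long annulus, using the Taylor bound $|v|\le Cr$ to control the second-derivative term. The only (immaterial) difference is the choice of cutoff profile --- the paper uses $\kappa(\log|\log r|-N)$, transitioning over $e^{-e^{N+1}}<r<e^{-e^{N}}$, while you interpolate linearly in $\log r$ over $\varepsilon^{2}<r<\varepsilon$; both yield the same $o(1)$ estimates.
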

\begin{proof}
Let $u\in H_0^2(\dot{X};L)$. Then there is a sequence of smooth sections $u_k$ converging to $u$ in $H^2(X;L)$. Due to the continuity of the embedding $H^2(X;L)\subset C(X;L)$, the last convergence implies $u_k(z(P))\to 0$. Then the sections $v_k=u_k-u_k(z(P))\mathsf{1}(\cdot,z(P))$ converge to $u$ in $H^2(X;L)$ while $v_k(z(P))=0$. Thus, $C^{\infty}_0(\dot{X};L)$ is dense in $H_0^2(\dot{X};L)$.

Next, suppose that $u\in C^{\infty}_0(\dot{X};L)$. Let $\kappa\in C^{\infty}(\mathbb{R})$, $\chi(s)=0$ for $s\le 0$ and $\chi(s)=1$ for $s\ge 1$. For large $N$, introduce the cut-off function $\kappa_{N}$ on $X$ which is defined by
$$\kappa_{N}(z_1)=\kappa({\rm log}\,|{\rm log}\,r|-N)$$
on $U_1$ and is equal to one outside $U_1$. Then each section $u^{(N)}=\kappa_{N}u$ belongs to $C^{\infty}_c(\dot{X};L)$ while $\|u-u^{(N)}\|_{H^2(X;L)}=\|(1-\kappa_{N})\acute{u}_1\|_{H^2(\mathbb{C}))}$. Since $\acute{u}_1$ is smooth and $\acute{u}_1(0)=0$, we have
$$\int\limits_{\mathbb{C}}|\partial^2_r[(1-\kappa_{N})\acute{u}_1]|^2d\xi^1d\xi^2\le \int\limits_{{\rm log}(-s)=N}^{N+1}\frac{cr^2}{r^4({\rm log}\,r)^2}rdr=\int\limits_{-e^N}^{-e^{N+1}}\frac{c}{s^2}ds \to 0 \quad (N\to +\infty).$$
This and similar estimates for other partial derivatives of $(1-\kappa_{N})\acute{u}_1$ yield $\|(1-\kappa_{N})\acute{u}_1\|_{H^2(\mathbb{C}))}\to 0$ as $N\to +\infty$. Therefore, $u^{(N)}\to u$ in $H^2(X;L)$ and $C^{\infty}_c(\dot{X};L)$ is dense in $H_0^2(\dot{X};L)$.
\end{proof}
Since $C^{\infty}_c(\dot{X};L)\subset {\rm Dom}\dot{\Delta}$ and the $H^2(\dot{X};L)$-norm and the graph norm are equivalent, Lemma \ref{H2 space lemma} implies the following corollary.
\begin{cor}
\begin{equation}
\label{domain of delta dot}
{\rm Dom}\dot{\Delta}=H_0^2(\dot{X};L).
\end{equation}
\end{cor}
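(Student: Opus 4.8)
The plan is to identify ${\rm Dom}\dot{\Delta}$ with the closure of $C^{\infty}_c(\dot{X};L)$ in the graph norm $\|\cdot\|_{\Delta}$, and then to upgrade the density statement of Lemma \ref{H2 space lemma} from the $H^2(X;L)$-topology to the graph-norm topology. By construction, $\dot{\Delta}$ is the $L_2(X;L)$-closure of operator (\ref{Dol}) acting on $C^{\infty}_c(\dot{X};L)$; hence ${\rm Dom}\dot{\Delta}$ is exactly the set of $u\in L_2(X;L)$ admitting a sequence $u_k\in C^{\infty}_c(\dot{X};L)$ with $u_k\to u$ and $\Delta u_k$ convergent in $L_2(X;L)$, i.e. the closure of $C^{\infty}_c(\dot{X};L)$ with respect to $\|\cdot\|_{\Delta}$.

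Next I would show ${\rm Dom}\dot{\Delta}\subset H^2(X;L)$. This is the one step where a genuine input is needed: the operator $\Delta$ (the $L_2(X;L)$-closure of (\ref{Dol}) on smooth sections, equivalently the Friedrichs extension $\Delta_0$) is self-adjoint, hence closed, with domain $H^2(X;L)$. If $u_k\in C^{\infty}_c(\dot{X};L)\subset H^2(X;L)$ and $(u_k,\Delta u_k)\to(u,f)$ in $L_2(X;L)\times L_2(X;L)$, then closedness of $\Delta$ gives $u\in H^2(X;L)$ and $\Delta u=f$. Thus ${\rm Dom}\dot{\Delta}\subset H^2(X;L)$, and $\dot{\Delta}$ is just the restriction of $\Delta$ to this subspace.

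Finally, on $H^2(X;L)$ the graph norm $\|\cdot\|_{\Delta}$ and the Sobolev norm $\|\cdot\|_{H^2(X;L)}$ are equivalent (the elliptic estimate already recorded above). Consequently a subspace of $H^2(X;L)$ is closed in the graph norm if and only if it is closed in the $H^2(X;L)$-norm, and the two closures of $C^{\infty}_c(\dot{X};L)$ inside $H^2(X;L)$ coincide. By Lemma \ref{H2 space lemma} this common closure is $H_0^2(\dot{X};L)$, whence ${\rm Dom}\dot{\Delta}=H_0^2(\dot{X};L)$. The argument is essentially formal once Lemma \ref{H2 space lemma} is available; the only point requiring care is the inclusion ${\rm Dom}\dot{\Delta}\subset H^2(X;L)$, that is, verifying that passing to the graph-norm closure does not create elements outside $H^2(X;L)$ --- precisely where one uses closedness (self-adjointness) of $\Delta$ together with elliptic regularity.
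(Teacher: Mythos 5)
Your proposal is correct and follows essentially the same route as the paper, which disposes of the corollary in one sentence: since $C^{\infty}_c(\dot{X};L)\subset{\rm Dom}\dot{\Delta}$ and the graph norm of $\Delta$ is equivalent to the $H^2(X;L)$-norm by ellipticity, the graph-norm closure of $C^{\infty}_c(\dot{X};L)$ coincides with its $H^2$-closure, which Lemma \ref{H2 space lemma} identifies as $H_0^2(\dot{X};L)$. Your extra step invoking closedness of the self-adjoint $\Delta$ to place ${\rm Dom}\dot{\Delta}$ inside $H^2(X;L)$ is a harmless (and slightly redundant) elaboration of what the norm equivalence already gives when applied to graph-norm Cauchy sequences of smooth compactly supported sections.
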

Now, let us describe the domain of $\dot{\Delta}^*$.
\begin{lemma}
We have
\begin{equation}
\label{domain of adjoint}
{\rm Dom}\dot{\Delta}^*=\big\{u=C(y)h(y)G(\cdot,y)+c(y)\mathsf{1}(\cdot,y)+\tilde{u} \ | \ y=y(P), \ c,C\in \Gamma(X;L), \ \tilde{u}\in {\rm Dom}\dot{\Delta}\big\},
\end{equation}
where $y$ is a holomorphic coordinate in a neighborhood of $P$.
\end{lemma}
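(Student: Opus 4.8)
The plan is to combine the standard description of the adjoint of a symmetric operator with elliptic regularity and the classification of distributions supported at a point. Since $C^\infty_c(\dot X;L)$ is a core for $\dot\Delta$ (Lemma \ref{H2 space lemma} together with (\ref{domain of delta dot})) and $\Delta$ is formally self-adjoint on $L_2(X;L)$, a section $u\in L_2(X;L)$ lies in ${\rm Dom}\,\dot\Delta^*$ with $\dot\Delta^* u=f$ if and only if $\langle\Delta\phi,u\rangle_{L_2(X;L)}=\langle\phi,f\rangle_{L_2(X;L)}$ for all $\phi\in C^\infty_c(\dot X;L)$, i.e.\ if and only if $\Delta u=f$ holds in the sense of distributions on $\dot X$. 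I would first prove the inclusion ``$\supseteq$'' by treating the three generators of the right-hand side of (\ref{domain of adjoint}) separately. A smooth section such as $\mathsf{1}(\cdot,y)$ lies in ${\rm Dom}\,\Delta=H^2(X;L)\subset{\rm Dom}\,\dot\Delta^*$ (recall $\dot\Delta\subset\Delta=\Delta^*\subset\dot\Delta^*$). Every $\tilde u\in{\rm Dom}\,\dot\Delta$ lies in ${\rm Dom}\,\dot\Delta^*$ because $\dot\Delta$ is symmetric. Finally $h(y)G(\cdot,y)$ lies in ${\rm Dom}\,\dot\Delta^*$ with $\dot\Delta^*\big(h(y)G(\cdot,y)\big)=0$: for $\phi\in C^\infty_c(\dot X;L)$ one integrates by parts on $X\setminus\{P\}$, the boundary contributions near $P$ vanishing since $\phi\equiv0$ there, and $\Delta_x G(x,P)=0$ for $x\ne P$ because ${\rm Ker}\,\Delta=\{0\}$. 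Linearity of the right-hand side of (\ref{domain of adjoint}) then gives ``$\supseteq$''.

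For ``$\subseteq$'', let $u\in{\rm Dom}\,\dot\Delta^*$ and set $f=\dot\Delta^* u\in L_2(X;L)$. Under the standing assumption $h^0(L)=0$ we have ${\rm Ker}\,\Delta=\{0\}$, so $\Delta$ is boundedly invertible on $L_2(X;L)$; put $w=\Delta^{-1}f\in H^2(X;L)={\rm Dom}\,\Delta$ and $v=u-w$. Then $v\in L_2(X;L)$ and, by the previous paragraph, $\Delta v=0$ in the sense of distributions on $\dot X$; equivalently, the distribution $\Delta v\in\mathcal{D}'(X;L)$ is supported at the single point $P$. Since $v\in L_2=H^0$, we also have $\Delta v\in H^{-2}(X;L)$. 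Working in a local holomorphic coordinate $x$ near $P$ and a local frame of $L$, a distribution supported at $\{P\}$ is a finite linear combination of $\delta_P$ and its derivatives $\partial_x^\alpha\delta_P$; taking Fourier transforms, such a combination lies in $H^{-2}$ of the two-dimensional coordinate space only if all terms with $|\alpha|\ge1$ vanish, since their Fourier transforms grow too fast at infinity and cannot cancel. Hence $\Delta v$ is a constant multiple of $\delta_P$. By elliptic regularity $v$ is smooth on $\dot X$, so any singularity of $v$ sits only at $P$. Choosing $C(y)$ so that $\Delta\big(C(y)h(y)G(\cdot,y)\big)=\Delta v$ (possible because $\Delta\big(h(y)G(\cdot,y)\big)$ is a nonzero constant multiple of $\delta_P$ by the defining property of the Green function), the section $v-C(y)h(y)G(\cdot,y)$ belongs to $L_2(X;L)$ and has vanishing distributional Laplacian on all of $X$; hence, by elliptic regularity, it lies in $H^2(X;L)={\rm Dom}\,\Delta$ and therefore in ${\rm Ker}\,\Delta=\{0\}$. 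Thus $v=C(y)h(y)G(\cdot,y)$.

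It remains to split the smooth remainder $w\in H^2(X;L)$. Put $c(y):=w(P)$ (well defined by the continuous embedding $H^2(X;L)\subset C(X;L)$) and $\tilde u:=w-c(y)\mathsf{1}(\cdot,y)$; then $\tilde u\in H^2(X;L)$ and $\tilde u(P)=w(P)-w(P)\,\mathsf{1}(P,P)=0$, so $\tilde u\in H^2_0(\dot X;L)={\rm Dom}\,\dot\Delta$ by (\ref{domain of delta dot}). Consequently $u=v+w=C(y)h(y)G(\cdot,y)+c(y)\mathsf{1}(\cdot,y)+\tilde u$, which is precisely the asserted form, and ``$\subseteq$'' follows. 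The step that requires genuine care, as opposed to routine elliptic bookkeeping, is the local analysis at $P$: ruling out derivatives of $\delta_P$ in $\Delta v$ --- equivalently, showing that the \emph{a priori} worst singularity of $v$ is only logarithmic --- which is exactly where the hypothesis $u\in L_2$ is used, through the $H^{-2}$/Fourier argument above. A further, purely notational, subtlety is keeping track of the density and bundle identifications relating $\delta_P$ to $\Delta_x\big(h(y)G(x,y)\big)$ in the line-bundle-valued setting.
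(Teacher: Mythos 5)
Your argument is correct (as written it needs the standing assumption ${\rm Ker}\,\Delta=\{0\}$), but it takes a genuinely different route from the paper's. The paper never localizes at $P$: it tests $u$ against an arbitrary $v\in H^2(X;L)={\rm Dom}\,\Delta$ decomposed as $v=v(y)\mathsf{1}(\cdot,y)+\tilde v$ with $\tilde v\in{\rm Dom}\,\dot\Delta$, reads off the defect functional $v\mapsto \overline{v(y)}C(y)$, absorbs it via the reproducing identity $(G(\cdot,y),\Delta v)+(B(\cdot,y),v)=\overline{v(y)}$, and concludes $u-C(y)G(\cdot,y)\in{\rm Dom}\,\Delta^*={\rm Dom}\,\Delta=H^2(X;L)$ directly from the self-adjointness of the Friedrichs extension; no distribution theory at $P$ is needed. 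Your route instead uses the classification of point-supported distributions together with the Fourier/$H^{-2}$ count that kills all $\partial^\alpha\delta_P$ with $|\alpha|\ge 1$ in two dimensions --- this is where $u\in L_2$ enters for you, whereas in the paper the same information enters implicitly through the fact that the only defect functional available is point evaluation (continuous on $H^2$ in dimension two). Your version makes explicit \emph{why} the worst admissible singularity is logarithmic, which the paper leaves hidden; the paper's version is shorter and, thanks to the Bergman kernel $B$, works verbatim when ${\rm Ker}\,\Delta\neq\{0\}$ --- e.g.\ for the trivial bundle, i.e.\ precisely the Colin de Verdi\`ere case this lemma is meant to subsume. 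To cover that case with your method you would have to replace $w=\Delta^{-1}f$ by a solution of $\Delta w=f-\Pi f$ (with $\Pi$ the orthogonal projection onto ${\rm Ker}\,\Delta$) and use that $\Delta_xG(x,y)=-B(x,y)$ away from $P$, so that $\Delta v$ is point-supported only modulo a smooth kernel element; the rest of your argument then goes through with minor adjustments.
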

\begin{proof}
Suppose that $\dot{\Delta}^*u=f$, i.e. $(u,\Delta v)_{L_2(X;L)}=(u,\dot{\Delta} v)_{L_2(X;L)}=(f,v)_{L_2(X;L)}$ for any $v\in {\rm Dom}\dot{\Delta}\equiv H_0^2(\dot{X};L)\subset {\rm Dom}\Delta$ (see (\ref{domain of delta dot})). Any section $v\in H^2(X;L)\equiv {\rm Dom}\Delta$ can be represented as $v(x)=v(y)\mathrm{1}(x,y)+\tilde{v}(x)$, where $y=y(P)$ is a holomorphic coordinate of $P$ while $\tilde{v}\in H_0^2(\dot{X};L)$. We have
\begin{align*}
(u,\Delta v)_{L_2(X;L)}-\overline{v(y)}(u,\Delta\mathrm{1}(\cdot,y))_{L_2(X;L)}&=\\
=(u,\Delta \tilde{v})_{L_2(X;L)}=(f,\tilde{v})_{L_2(X;L)}&=(f,v)_{L_2(X;L)}-\overline{v(y)}(f,\mathrm{1}(\cdot,y))_{L_2(X;L)},
\end{align*}
i.e.
$$(u,\Delta v)_{L_2(X;L)}-\overline{v(y)}C(y)=(f,v)_{L_2(X;L)},$$
where $C(y)=(u,\Delta\mathrm{1}(\cdot,y))_{L_2(X;L)}-(f,\mathrm{1}(\cdot,y))_{L_2(X;L)}$. Recall that 
\begin{equation}
\label{bergman}
(G(\cdot,y),\Delta v)_{L_2(X;L)}+(B(\cdot,y),v)_{L_2(X;L)}=\overline{v(y)},
\end{equation}
where $(x,y)\mapsto B(x,y)$ is the Bergman kernel of $\Delta$ (the integral kernel of the orthogonal projection on ${\rm Ker}\Delta$ in $L_2(X;L)$). Thus, 
$$(u-C(y)G(\cdot,y),\Delta v)_{L_2(X;L)}=(f+C(y)B(\cdot,y),v)_{L_2(X;L)} \qquad \forall v\in H^2(X;L),$$
i.e. $u-C(y)G(y,\cdot)\in {\rm Dom}\Delta^*={\rm Dom}\Delta=H^2(X;L)$ and 
$$\Delta^*[u-C(y)G(\cdot,y)]=f+C(y)B(\cdot,y).$$
In particular, there is $c(y)$ such that $u(x)-C(y)G(x,y)=c(y)\mathrm{1}(x,y)+\tilde{u}$, where $\tilde{u}\in H_0^2(\dot{X};L)={\rm Dom}\dot{\Delta}^*$ due to (\ref{domain of delta dot}).
\end{proof}
Now we describe the self-adjoint extensions of $\dot{\Delta}$. 
\begin{lemma}
The operators $\Delta_\alpha$ ($\alpha\in(-\pi/2,\pi/2]$) defined by {\rm (\ref{Domains act})},{\rm(\ref{Domains})} are all the self-adjoint extensions of $\dot{\Delta}$. The Friedrichs extension of $\dot{\Delta}$ is $\Delta_0\equiv\Delta$.
\end{lemma}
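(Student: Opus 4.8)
The plan is to combine von Neumann's theory of self-adjoint extensions with the description of ${\rm Dom}\,\dot{\Delta}^*$ from the previous lemma, recast through the boundary (symplectic) form of $\dot{\Delta}$; we work under ${\rm Ker}\,\Delta=\{0\}$ (i.e. $h^0(L)=0$), so that the Bergman kernel vanishes and $G$ is the genuine Green function of $\Delta$ (the corrections in the general case are routine). First I would note that $\dot{\Delta}$ is densely defined (a point has measure zero), closed (an $L_2$-closure), symmetric, and non-negative (a restriction of $\Delta\ge0$). By the previous lemma, every $u\in{\rm Dom}\,\dot{\Delta}^*$ is $u=C_u\,h(y)G(\cdot,y)+c_u\,\mathsf{1}(\cdot,y)+\tilde u$ with $y=y(P)$, $c_u,C_u\in L_y$ and $\tilde u\in{\rm Dom}\,\dot{\Delta}$, so ${\rm Dom}\,\dot{\Delta}^*/{\rm Dom}\,\dot{\Delta}$ is two-dimensional over $\mathbb{C}$ with basis $[h(y)G(\cdot,y)]$, $[\mathsf{1}(\cdot,y)]$; as a semibounded symmetric operator has equal deficiency indices, these are $(1,1)$. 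Hence the self-adjoint extensions of $\dot{\Delta}$ are exactly the restrictions of $\dot{\Delta}^*$ to domains $D$ with ${\rm Dom}\,\dot{\Delta}\subset D\subset{\rm Dom}\,\dot{\Delta}^*$ whose image in the quotient is a maximal isotropic subspace for the skew-Hermitian boundary form $\mathcal B(u,v)=(\dot{\Delta}^*u,v)_{L_2(X;L)}-(u,\dot{\Delta}^*v)_{L_2(X;L)}$ (it vanishes once an argument lies in ${\rm Dom}\,\dot{\Delta}$, so descends to the quotient).

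The core computation is to evaluate $\mathcal B$ on the singular data. Writing $u=C_u\,h(y)G(\cdot,y)+w$ with $w=c_u\,\mathsf{1}(\cdot,y)+\tilde u\in H^2(X;L)$, one has $\dot{\Delta}^*u=\Delta w$ by the construction of $\dot{\Delta}^*$, and similarly $v=C_v\,h(y)G(\cdot,y)+z$, $\dot{\Delta}^*v=\Delta z$. Then
\[
\mathcal B(u,v)=\overline{C_v}\,(\Delta w,h(y)G(\cdot,y))_{L_2(X;L)}-C_u\,(h(y)G(\cdot,y),\Delta z)_{L_2(X;L)}+\big[(\Delta w,z)_{L_2(X;L)}-(w,\Delta z)_{L_2(X;L)}\big].
\]
The bracket vanishes because $w,z\in H^2(X;L)={\rm Dom}\,\Delta$ and $\Delta=\Delta^*$, while the reproducing identity $(G(\cdot,y),\Delta\phi)_{L_2(X;L)}=\overline{\phi(y)}$ for $\phi\in H^2(X;L)$, together with $w(y)=c_u$ and $z(y)=c_v$ (since $\mathsf{1}(y,y)=1$ and $\tilde u(y)=\tilde v(y)=0$), yields $\mathcal B(u,v)=h(y)\,\big(c_u\overline{C_v}-C_u\overline{c_v}\big)$. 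In the basis above, $\mathcal B$ is therefore $h(y)\ne0$ times the standard symplectic form on $\mathbb{C}^2$. I expect this to be the only step needing genuine care: one must justify integrating by parts on $X\setminus B_\varepsilon(P)$ and check that the contributions of the logarithmic singularity of $G$ recombine into exactly the reproducing formula with the normalization fixed by (\ref{Dol}); for the present purpose only the non-vanishing of the constant is used.

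It remains to classify the maximal isotropic subspaces. In the quotient, ${\rm Dom}\,\Delta_\alpha$ corresponds to $W_\alpha=\{\,c\,(\sin\alpha\,[h(y)G(\cdot,y)]+\cos\alpha\,[\mathsf{1}(\cdot,y)]):c\in\mathbb{C}\,\}$; since $\sin\alpha,\cos\alpha$ are real, $\mathcal B$ vanishes identically on $W_\alpha$, and $\dim_{\mathbb{R}}W_\alpha=2$, so $W_\alpha$ is maximal isotropic and each $\Delta_\alpha$ is self-adjoint; moreover $\alpha\mapsto W_\alpha$ is injective on $(-\pi/2,\pi/2]$. Conversely, let $W\subset\mathbb{C}^2$ be any maximal ($=$ real two-dimensional) $\mathcal B$-isotropic subspace. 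If the projection of $W$ onto the $[\mathsf{1}(\cdot,y)]$-coordinate is not injective, $W$ contains a nonzero multiple of $[h(y)G(\cdot,y)]$, and pairing $\mathcal B$ against it shows the $[\mathsf{1}(\cdot,y)]$-coordinate vanishes on $W$, i.e. $W$ is the line spanned by $[h(y)G(\cdot,y)]$, so $W=W_{\pi/2}$; otherwise that projection is a bijection, so $W$ is the graph $\{(\lambda(c),c):c\in\mathbb{C}\}$ of an $\mathbb{R}$-linear $\lambda\colon\mathbb{C}\to\mathbb{C}$, and isotropy forces $\lambda(c)\,\overline c\in\mathbb{R}$ for all $c$; writing $\lambda(c)=ac+b\overline c$ this gives $b=0$, $a\in\mathbb{R}$, whence $W=W_\alpha$ for the $\alpha\in(-\pi/2,\pi/2)$ with $\tan\alpha=a$. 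Thus the operators $\Delta_\alpha$, $\alpha\in(-\pi/2,\pi/2]$, are precisely all the self-adjoint extensions of $\dot{\Delta}$.

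Finally, $\dot{\Delta}\subset\Delta=\Delta^*$, so $\Delta$ is one of the $\Delta_\alpha$, and comparing (\ref{Domains}) at $\alpha=0$ with ${\rm Dom}\,\Delta=H^2(X;L)$ identifies it with $\Delta_0$. It is the Friedrichs extension since its form domain — the closure of $C^{\infty}_c(\dot{X};L)$ in the energy norm, which is equivalent to $\|\cdot\|_{H^1(X;L)}$ — equals $H^1(X;L)$, a point being polar in real dimension two (the cut-off argument of Lemma \ref{H2 space lemma} applies with one derivative in place of two), whereas for $\alpha\ne0$ the domain of $\Delta_\alpha$ contains sections with a genuine logarithmic singularity at $P$, not in $H^1(X;L)$. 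Hence $\Delta_0=\Delta$ is the unique self-adjoint extension of $\dot{\Delta}$ with domain contained in the form domain, i.e. the Friedrichs extension.
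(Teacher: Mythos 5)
Your argument follows the paper's proof essentially step for step: reduce to the quotient ${\rm Dom}\,\dot{\Delta}^*/{\rm Dom}\,\dot{\Delta}$ via the preceding lemma, compute the skew-Hermitian boundary form $h(y)\big(c_u\overline{C_v}-C_u\overline{c_v}\big)$, classify its Lagrangian lines as the $W_\alpha$, and identify the Friedrichs extension by showing the form domain contains $H^1(X;L)\supset {\rm Dom}\,\Delta$ (the paper likewise cites the characterization of the Friedrichs extension as the unique one with domain inside ${\rm Dom}\,a$ and uses the density of $C^\infty_c(\dot X;L)$ in $H^1$). The only divergences are cosmetic: you spell out the boundary-form computation via the reproducing identity and the classification of isotropic subspaces, both of which the paper merely asserts, and you restrict to ${\rm Ker}\,\Delta=\{0\}$, a hypothesis the lemma itself does not impose but which you flag explicitly.
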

\begin{proof}
According to (\ref{domain of adjoint}), the map
$$u=C_u(y)h(y)G(\cdot,y)+c_u(y)\mathsf{1}(\cdot,y)+\tilde{u}\mapsto \Big (C_u(y),c_u(y)\Big)$$
induces the isomorphism between ${\rm Dom}\dot{\Delta}^*/{\rm Dom}\dot{\Delta}$ and $\Gamma(X;L)/C_0^{\infty}(\dot{X};L)\simeq\mathbb{C}$. The equation
\begin{equation}
\label{symp form expl}
\mathrm{S}(u_1/{\rm Dom}\dot{\Delta},u_2/{\rm Dom}\dot{\Delta}):=(\dot{\Delta}^* u_1,u_2)_{L_2(X;L)}-(u_1,\dot{\Delta}^* u_2)_{L_2(X;L)}=[c_{u_1}h\overline{C_{u_2}}-C_{u_1}h\overline{c_{u_2}}](P).
\end{equation}
defines the complex symplectic (i.e. sesquilinear, skew-Hermitian, and non-degenerate) form on the quotient space ${\rm Dom}\dot{\Delta}^*/{\rm Dom}\dot{\Delta}$. 

Recall that $\mathscr{L}\subset {\rm Dom}\dot{\Delta}^*$ is the domain of some self-adjoint extension of $\dot{\Delta}$ if and only if $\mathscr{L}/{\rm Dom}\dot{\Delta}$ is a Lagrangian subspace of ${\rm Dom}\dot{\Delta}^*/{\rm Dom}\dot{\Delta}$. In view of (\ref{symp form expl}), $\{G(\cdot,y)/{\rm Dom}\dot{\Delta}, 1(\cdot,y)/{\rm Dom}\dot{\Delta}\}$ is the Darboux basis in ${\rm Dom}\dot{\Delta}^*/{\rm Dom}\dot{\Delta}$. Thus, all the Lagrangian subspaces in ${\rm Dom}\dot{\Delta}^*/{\rm Dom}\dot{\Delta}$ are given by
$$\mathcal{L}_\alpha=\{u/{\rm Dom}\dot{\Delta} \ | \ \big[c_u/C_u\big](P)={\rm ctg}\alpha\}$$
with $\alpha\in(-\pi/2,\pi/2]$. Therefore, all the self-adjoint extensions of $\dot{\Delta}$ are given by (\ref{Domains act}),(\ref{Domains}). From (\ref{Domains}) it easily follows that $\Delta_0=\Delta$. 

Introduce the sesquilinear form 
$$a(u,v):=(\dot{\Delta}u,v)_{L_2(X;L)}=(\overline{\partial}u,\overline{\partial}v)_{L_2(X;L\otimes\overline{K})}=\int_{X}\overline{\partial}u  h\rho^2\partial\overline{v}dS \qquad (u,v\in {\rm Dom}\dot{\Delta}).$$
This form admits the closure, also denoted by $a$. It is well-known  (see, e.g., \cite{BS}, Theorem 10.3.1) that the Friedrichs extension is the unique extension of $\dot{\Delta}$ whose domain is contained in ${\rm Dom}\,a$.

Note that the convergence in $H^1(X;L)$ implies the convergence in $a$-norm $\|u\|_a=(a(u,u))^{1/2}$. Using the same arguments as in Lemma \ref{H2 space lemma}, one can prove that $C_c^{\infty}(\dot{X};L)$ is dense in $H^1(X;L)$. Thus, $H^1(X;L)$ belongs to ${\rm Dom}\,a$. In particular, ${\rm Dom}\,\Delta\equiv H^2(X;L)\subset{\rm Dom}\,a$ and, thus, $\Delta$ is Friedrichs.  
\end{proof}
In the rest of the section, we compare parametrizations (\ref{Domains act}), (\ref{Domains}) and (\ref{domain via asymptotics act}), (\ref{domain via asymptotics}) of pseudo-laplacians.
\begin{lemma}
\label{parametrization comparing lemma}
Formula {\rm (\ref{parametrization comparing})} is valid.
\end{lemma}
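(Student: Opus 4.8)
The plan is to compare the two parametrizations directly by expanding a generic element of $\mathrm{Dom}\,\Delta_\alpha$ near $P$ and matching it against the asymptotic normal form \eqref{domain via asymptotics}. So first I would take $u\in\mathrm{Dom}\,\Delta_\alpha$ of the form $u=c_u(y)\big(h(y)G(\cdot,y)\sin\alpha+\mathsf{1}(\cdot,y)\cos\alpha\big)+\tilde u$ with $\tilde u\in\mathrm{Dom}\dot\Delta=H^2_0(\dot X;L)$, and normalize by the constant $c_u(y)$ (both parametrizations are invariant under scaling, and in \eqref{domain via asymptotics} the normalization is fixed by requiring the $\tilde u$-part to vanish at $P$). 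Using the Green-function asymptotics \eqref{Green asymp} in the holomorphic coordinate $x$, namely $h(y)G(x,y)=-\tfrac1{2\pi}\log|x-y|+\tfrac{\rho(y)}{2\pi}+m(y)+o(1)$, together with the known finer expansion of $G$ that produces the $(x-y)\log(\overline{x-y})$ term with coefficient $\tfrac{\partial_y\log h(y)}{4\pi}$ (this is the standard local structure of the Dolbeault Green function — it is exactly the singular profile appearing in \eqref{domain via asymptotics}), I would read off that, up to the $H^2$-smooth remainder,
\[
u(x)=c_u(y)\Big[\cos\alpha+\sin\alpha\Big(-\tfrac1{2\pi}\log|x-y|+\tfrac{\rho(y)}{2\pi}+m(y)+\tfrac{\partial_y\log h(y)}{4\pi}(x-y)\log(\overline{x-y})\Big)\Big]+(\text{smooth}).
\]

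Next I would match this against the right-hand side of \eqref{domain via asymptotics}, i.e. against $\cos\beta+\sin\beta\big[-\tfrac1{2\pi}\log|x-y|+\tfrac{\partial_y\log h(y)}{4\pi}(x-y)\log(\overline{x-y})\big]+\tilde u$ with $\tilde u(y)=0$. Comparing the coefficients of the logarithmic singularity $-\tfrac1{2\pi}\log|x-y|$ forces the overall normalization $c_u(y)\sin\alpha=\sin\beta$ (after rescaling $u$); comparing the remaining bounded part at $x=y$, the constant term of $u$ is $c_u(y)\big(\cos\alpha+\sin\alpha(\tfrac{\rho(y)}{2\pi}+m(y))\big)$, which must equal $\cos\beta$. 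Dividing the constant-term equation by the singular-coefficient equation gives
\[
\cot\beta=\frac{\cos\beta}{\sin\beta}=\frac{\cos\alpha+\sin\alpha\big(\tfrac{\rho(y)}{2\pi}+m(y)\big)}{\sin\alpha}=\cot\alpha+m(P)+\frac{\rho(y)}{2\pi},
\]
which is exactly \eqref{parametrization comparing}. I should also note the boundary cases: $\alpha=\pi/2$ corresponds to $\cot\alpha=0$ (pure Green-function behaviour), and the excluded value $\beta=\pi/2$ would correspond to the Friedrichs extension $\Delta=\Delta_0$, consistent with $\Delta^{(\beta)}$ ranging only over $\beta\in(-\pi/2,\pi/2)$ in the local parametrization — so the two families exhaust the same set of self-adjoint extensions.

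The main obstacle is making the finer asymptotic expansion of $G$ rigorous, specifically justifying the $(x-y)\log(\overline{x-y})$ term with the claimed coefficient $\tfrac{\partial_y\log h(y)}{4\pi}$ and controlling the error in $H^2_{\mathrm{loc}}$ rather than merely pointwise. I would handle this by writing $\Delta\big(h(y)G(\cdot,y)\big)=0$ away from $P$ in the coordinate $x$, expanding $h$ and $\rho^2$ in Taylor series about $y$, and solving the resulting hierarchy of equations $-4\partial\overline\partial(\cdot)=(\text{lower order})$ term by term — the inhomogeneity $\partial\overline\partial$ applied to $\log|x-y|$ is a delta, and the next correction, coming from the first-order variation of $h$, produces precisely the $(x-y)\log\overline{(x-y)}$ profile; the remainder then lies in $H^2$ by elliptic regularity since it solves an equation with $L^2$ right-hand side. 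Everything else is bookkeeping of trigonometric identities and the scaling normalization, which I would state but not belabor.
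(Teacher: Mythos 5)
Your proposal is correct and takes essentially the same route as the paper's proof: the paper likewise introduces the explicit local singular profile $-\frac{1}{2\pi}\log|x-y|+\frac{\partial_y\log h(y)}{4\pi}(x-y)\log(\overline{x-y})$, verifies that applying $\Delta$ to a cut-off of it yields a bounded (hence $L_2$) section so that its difference with $G(\cdot,y)$ lies in $H^2(X;L)$ by elliptic regularity, and then obtains (\ref{parametrization comparing}) by exactly the coefficient matching you describe. The only slip is your closing aside: the Friedrichs extension $\Delta_0$ corresponds to $\beta=0$ (both cotangents infinite), not to $\beta=\pi/2$; this does not affect the derivation of the formula.
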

\begin{proof}
Let $x$ be a holomorphic coordinate in a neighborhood of $P$ and $y=x(P)$. Let $\chi$ be a cut-off function equal to $1$ near $P$ and let the support of $\chi$ be sufficiently small. Introduce the section $G_{loc}$ of $L$ vanishing outside ${\rm supp}\chi$ by the equation $G_{loc}(x)=\chi(x)G_{as}$, where
$$G_{as}(x)=-\frac{1}{2\pi}{\rm log}|x-y|+\frac{\partial_y{\rm log} h(y)}{4\pi}(x-y){\rm log}(\overline{x-y})$$
in the local coordinate $x$. In view of (\ref{Dol}), we have
\begin{align*}
\Delta G_{loc}(x)&=-4\rho^2\partial\overline{\partial} G_{loc}(x)-4\rho^2(x)\partial_x{\rm log}h(x)\partial_{\overline{x}}G_{loc}(x)=[\Delta,\chi]G_{as}(x)+\\
+\chi(x)&\rho^2(x)\frac{\partial_x{\rm log}h(x)-\partial_y{\rm log} h(y)-\partial_x{\rm log}h(x)\partial_y{\rm log} h(y)(x-y)}{\pi(\overline{x-y})}=O(1), \qquad x\to y, \ x\ne y.
\end{align*}
Therefore, $\Delta[G_{loc}-G(\cdot,y)]\in L_2(X;L)$ in the sense of distributions. Due to the equivalence of norms (\ref{Greph norm}) and (\ref{Sobolev norm}), we have $G_{loc}-G(\cdot,y)\in H^2(X;L)$. Now, formulas (\ref{Green asymp}) and (\ref{domain of delta dot}) imply
$$G_{loc}=G(\cdot,y)-\mathsf{1}(\cdot,y)\big[m(P)+\rho(y)/2\pi]+\tilde{G}_{loc}, \qquad \tilde{G}_{loc}\in{\rm Dom}\,\dot{\Delta}.$$
Then any section $u$ given by (\ref{domain via asymptotics}) can be represented as
\begin{align*}
u=G(\cdot,y){\rm sin}\beta+\mathsf{1}(\cdot,y)[{\rm cos}\beta-(m(P)+\rho(y)/2\pi){\rm sin}\beta]+\tilde{u}, \qquad \tilde{u}\in{\rm Dom}\,\dot{\Delta}. 
\end{align*}
In particular, $u\in {\rm Dom}\Delta_{\alpha}$ where $\alpha$ is related to $\beta$ via (\ref{parametrization comparing}).
\end{proof}

\section{Comparison formulas for determinants}
\label{Comp F sec}
\paragraph{Comparison formula for the resolvents of $\Delta$ and $\Delta_\alpha$.} As mentioned in the introduction, we assume that ${\rm Ker}\,\Delta=\{0\}$. Suppose that $\lambda\in\mathbb{C}$ is not an eigenvalue of $\Delta$ and $\alpha\in(-\pi/2,0)\cup(0,\pi/2)$. Let $(\Delta-\lambda)u=f$. We search for the solution $u_\alpha$ to $(\Delta_\alpha-\lambda)u_\alpha=f$ of the form 
\begin{equation}
\label{solution alpha form}
u_\alpha=u+d(y)h(y)R_\lambda(\cdot,y),
\end{equation}
where $d\in\Gamma(X;L)$, $y=y(P)$ is a holomorphic coordinate of $P$, and $(x,y)\mapsto R_\lambda(x,y)$ is the resolvent kernel of $\Delta$. Since $(\Delta-\lambda)R_\lambda(\cdot,y)=0$ outside $P$, we have $(\dot{\Delta}^*-\lambda)u_\alpha=f$. In view of Hilbert's identity $R_\lambda(\cdot,y)-G(\cdot,y)=\lambda(\Delta-\lambda)^{-1}G(\cdot,y)$, we obtain
\begin{equation}
\label{special solution asymp}
h(y)R_\lambda(\cdot,y)=h(y)G(\cdot,y)+T(\lambda)\mathsf{1}(\cdot,y)+\tilde{R}_\lambda(\cdot,y),
\end{equation}
where $\tilde{R}_\lambda(\cdot,y)\in {\rm Dom}\,\dot{\Delta}$ and the number $T(\lambda)$ is called the scattering coefficient. Note that $T(0)=0$. As a corollary of (\ref{special solution asymp}), we have
$$u_\alpha=d(y)h(y)G(\cdot,y)+[u(y)+d(y)T(\lambda)]\mathsf{1}(\cdot,y)+\tilde{u}_\alpha,$$
where $\tilde{u}_\alpha\in {\rm Dom}\,\dot{\Delta}$. Comparing the last formula with (\ref{Domains}), we conclude that $u_\alpha\in{\rm Dom}\,\Delta_\alpha$ if and only if
\begin{equation}
\label{coefficient d}
d(y)=\frac{u(y)}{{\rm ctg}\alpha-T(\lambda)}=\frac{(f,R_{\overline{\lambda}}(\cdot,y))_{L_2(X;L)}}{{\rm ctg}\alpha-T(\lambda)}.
\end{equation}
Since $R_\lambda$ is the resolvent kernel of $\Delta$, we have $u(y)=(f,\overline{R_{\lambda}(y,\cdot)})_{L_2(X;L)}=(f,R_{\overline{\lambda}}(\cdot,y))_{L_2(X;L)}$. Therefore, formulas (\ref{solution alpha form}) and (\ref{coefficient d}) imply 
\begin{equation}
\label{resolvent difference}
[(\Delta_\alpha-\lambda)^{-1}-(\Delta-\lambda)^{-1}]f=u_\alpha-u=\frac{(f,R_{\overline{\lambda}}(\cdot,y))_{L_2(X;L)}h(y)R_\lambda(\cdot,y)}{{\rm ctg}\alpha-T(\lambda)}
\end{equation}
(here the denominator in the right-hand side equals zero if and only if $\lambda$ is an eigenvalue of $\Delta_{\alpha}$). 

Note that, in the right-hand side of (\ref{resolvent difference}), the one-dimensional operator acts on $f$. Then
\begin{equation}
\label{trace of the difference}
{\rm Tr}[(\Delta_\alpha-\lambda)^{-1}-(\Delta-\lambda)^{-1}]=\frac{h(y)(R_\lambda(\cdot,y),R_{\overline{\lambda}}(\cdot,y))_{L_2(X,L)}}{{\rm ctg}\alpha-T(\lambda)}.
\end{equation}
Since $(\Delta_\alpha-i)^{-1}-(\Delta-i)^{-1}$ is a one-dimensional operator, the essential spectra of $\Delta_\alpha$ and $\Delta$ coincide (see Theorem 9.1.4, \cite{BS}). Since the spectrum of $\Delta$ is discrete, the spectrum of any $\Delta_\alpha$ is also discrete. Also, since $\Delta$ is the Friedrichs extension of $\dot{\Delta}$, we have $\Delta_\alpha<\Delta$ for $\alpha\in(0,\pi)$ (see Corollary 10.3.2, \cite{BS}) and, since the spectra of the operators $\Delta_\alpha$ and $\Delta$ are discrete, their exact lower bounds obey $m_{\Delta}>m_{\Delta_\alpha}$. In view of Theorems 10.3.7 and 10.3.8, \cite{BS}, there is exactly one eigenvalue $\lambda_1(\Delta_\alpha)$ which does not belong to $[m_{\Delta},+\infty)$. In particular, each $\Delta_\alpha$ is semi-bounded.

Differentiating the equation $(\Delta-\lambda)R_\lambda(\cdot,y)=0$ in $\dot{X}$, one obtains $(\Delta-\lambda)\partial_\lambda R_\lambda(\cdot,y)=R_\lambda(\cdot,y)$ while (\ref{special solution asymp}) implies
$$h(y)\partial_\lambda R_\lambda(\cdot,y)=\partial_{\lambda}T(\lambda)\mathsf{1}(\cdot,y)+W(\cdot,y),$$
where $W(\cdot,y)\in {\rm Dom}\,\dot{\Delta}$. Hence
$$\partial_{\lambda}T(\lambda)=h(y)\partial_\lambda R_\lambda(y,y)=h(y)(R_\lambda(\cdot,y),R_{\overline{\lambda}}(\cdot,y))_{L_2(X;L)}.$$
Now (\ref{trace of the difference}) takes the form
\begin{equation}
\label{trace of the difference 1}
{\rm Tr}[(\Delta_\alpha-\lambda)^{-1}-(\Delta-\lambda)^{-1}]=-\partial_\lambda{\rm log}\big({\rm ctg}\alpha-T(\lambda)\big).
\end{equation}

\paragraph{Comparison formula for $\zeta(s|\Delta)$ and $\zeta(s|\Delta_\alpha)$.} Suppose that ${\rm Ker}\,\Delta_\alpha=\{0\}$.  We define $\lambda^{-s}:={\rm exp}(-s{\rm log}\lambda)$, where the cut for the logarithm is a simple path $\varpi_{cut}$ going from $\lambda=-\infty$ to $\lambda=0$ which does not contain eigenvalues of $\Delta$ and $\Delta_\alpha$. We assume that $\varpi_{cut}$ coincides with the semi-axis $(-\infty,a_0]$ outside the semi-plane $\Re\lambda>a_0$ (where $a_0<\min\{m_{\Delta},m_{\Delta_\alpha}\}$) and with the semi-axis $\lambda<0$ in a small neighborhood of $\lambda=0$. For $\Re s>0$ and $A=\Delta$ or $A=\Delta_\alpha$, we have
\begin{equation}
\label{powers of operator via resolvent}
A^{-s}=\frac{1}{2\pi i}\int\limits_\varpi(A-\mu)^{-1}\mu^{-s}d\mu \qquad (\varpi:=\partial\big(\mathbb{C}\backslash( \gamma_{cut}\cup U_\epsilon)\big)).
\end{equation}
where $\varpi$ is the boundary of the domain obtained from $\mathbb{C}$ by deleting $\varpi_{cut}$ and a small $\epsilon$-neighborhood $U_\epsilon$ of $\mu=0$. Since the difference $(\Delta_\alpha-\lambda)^{-1}-(\Delta-\lambda)^{-1}$ is a one-dimensional operator for any $\lambda$, the integrals of it converge in both operator and trace norms. Then (\ref{powers of operator via resolvent}) and (\ref{trace of the difference 1}) imply
\begin{equation}
\label{difference of zetas via resolvent}
\begin{split}
\zeta(s|\Delta)-\zeta(s|\Delta_\alpha)=\int_\varpi \partial_\mu{\rm log}\big({\rm ctg}\alpha-T(\mu)\big)\frac{\mu^{-s}d\mu}{2\pi i}&=\\
=sJ_0(s)+\pi^{-1}{\rm sin}(\pi s)\big[e^{-\pi is}J_{-\infty}(s)-&{\rm log}\big({\rm ctg}\alpha-T(-\epsilon)\big)\epsilon^{-s}\big].
\end{split}
\end{equation}
where 
$$J_0(s)=\int_{|\mu|=\epsilon} {\rm log}\big({\rm ctg}\alpha-T(\mu)\big)\frac{\mu^{-(s+1)}d\mu}{2\pi i}$$
is an entire function of $s$ and 
$$J_{-\infty}(s)=\int_{\varpi_{cut}\backslash U_\epsilon} \partial_\mu{\rm log}\big({\rm ctg}\alpha-T(\mu)\big)\mu^{-s}d\mu.$$
To study the analyticity properties of $J_{-\infty}$, we derive the asymptotics of $T(\lambda)$ as $\lambda\to -\infty$. To this end, let us recall the following asymptotics of the resolvent kernel (see formulas (2.32) on p.38 and (2.25) on p.34, \cite{Fay}) 
\begin{equation}
\label{Fays asymptotics}
h(y)R_{\lambda}(x,y)+\frac{1}{2\pi}d(x,y)=\frac{1}{4\pi}\Big[a_0-{\rm log}(|\lambda|+1)+\frac{a_{-1}(y)}{(|\lambda|+1)}\Big]+\tilde{R}_{\lambda}(x,y) \ (x\to y).
\end{equation}
Here $y=x(P)$ and the remainder $\tilde{R}_{\lambda}(x,y)$ is continuous at $x=y$ and obeys the (admitting differentiation) estimate $\tilde{\mathcal{R}}_{\lambda}(y,y|\Delta)=O(\lambda^{-2})$. The coefficients in (\ref{Fays asymptotics}) are given by
$$a_0=2({\rm log}2-\gamma), \quad a_{-1}=1+R+K/3,$$
where $K=4\rho^2\partial\overline{\partial}{\rm log}\rho$ and $R=-2\rho^2\partial\overline{\partial} {\rm log}h$ are the scalar curvatures of the metrics $\rho^{-2}|dz|^2$ and $h$. Comparing formulas (\ref{Fays asymptotics}), (\ref{Green asymp}) and (\ref{special solution asymp}), we obtain 
\begin{align*}
T(\lambda)=h(y)[R_\lambda-G](y,y)=\frac{1}{4\pi}\Big[a_0-{\rm log}(|\lambda|+1)+\frac{a_{-1}(y)}{(|\lambda|+1)}\Big]-m(P)+O(\lambda^{-2}).
\end{align*}
Therefore,
\begin{align*}
\partial_\lambda{\rm log}\big({\rm ctg}\alpha-T(\lambda)\big)=\frac{\partial_\lambda T(\lambda)}{T(\lambda)-{\rm ctg}\alpha}=\frac{-1}{|\lambda|\big(\mathfrak{q}+{\rm log}|\lambda|\big)}+\tilde{q}(\lambda),
\end{align*}
where $\mathfrak{q}=4\pi[m(P)+{\rm ctg}\alpha]-a_0$ and $\tilde{q}(\lambda)=O(|\lambda|^{-2}) \quad (\lambda\to-\infty)$. Thus,
\begin{align*}
-e^{-\pi is}J_{-\infty}(s)=e^{-\pi is}\int_{\varpi_{cut}\backslash U_\epsilon} \frac{\mu^{-s}d\mu}{|\mu|\big(\mathfrak{q}+{\rm log}|\mu|\big)}+\tilde{J}_{-\infty}(s).
\end{align*}
The remainder
$$\tilde{J}_{-\infty}(s)=-e^{-\pi is}\int_{\varpi_{cut}\backslash U_\epsilon}\mu^{-s}\tilde{q}(\mu)d\mu$$
is analytic for $\Re s>-1$. In the last two formulas, one can replace the integration contour in the right-hand sides by $(-\infty,-\epsilon)$ (then $\mu^{-s}d\mu=-|\mu|^{-s}e^{\pi i s}d|\mu|$). Thus,
\begin{align*}
-e^{-\pi is}J_{-\infty}(s)-\tilde{J}_{-\infty}(s)&=\int_{\epsilon}^{+\infty}\frac{t^{-(s+1)}dt}{{\rm log}t+\mathfrak{q}}=\\
=&e^{s\mathfrak{q}}\int_{s({\rm log}\epsilon+\mathfrak{q})}^{+\infty}\frac{e^{-p}dp}{p}=-e^{s\mathfrak{q}}{\rm Ei}(-s({\rm log}\epsilon+\mathfrak{q})),
\end{align*}
where $p=s({\rm log}t+\mathfrak{q})$ and ${\rm Ei}$ denotes the exponential integral (cf. \cite{KLP}). Now (\ref{difference of zetas via resolvent}) takes the form
\begin{align*}
\zeta(s|\Delta)-\zeta(s|\Delta_\alpha)=sJ_{0}(s)&+\\
+\pi^{-1}{\rm sin}(\pi s)\Big[e^{s\mathfrak{q}}{\rm Ei}&(-s({\rm log}\epsilon+\mathfrak{q}))-\tilde{J}_{-\infty}(s)-{\rm log}\big({\rm ctg}\alpha-T(-\epsilon)\big)\epsilon^{-s}\Big].
\end{align*}
In view of the series representation
$${\rm Ei}(z)={\rm log}z+\gamma+z+O(z^2) \qquad (z\to 0, \ {\rm arg}z\in[-\pi,\pi)),$$
we have
\begin{align*}
[\zeta(s|\Delta_\alpha)+s{\rm log}s]-\zeta(s|\Delta)&=\\
=-s\Big[{\rm log}(-({\rm log}\epsilon+&\mathfrak{q}))+\gamma+\int_{-\infty}^{-\epsilon}\tilde{q}(\mu)d\mu+{\rm log}\frac{{\rm ctg}\alpha-T(0)}{{\rm ctg}\alpha-T(-\epsilon)}\Big]+\tilde{o}_2(s),
\end{align*}
where $s\mapsto\tilde{o}_2(s)$ is analytic near $s=0$ and $s=0$ is a zero of $\tilde{o}_2$ of order $2$. Thus, $s\mapsto\zeta(s|\Delta_\alpha)$ has logarithmic singularity at $s=0$ and one needs to apply regularization (\ref{reg zeta}). Then the regularized zeta function $s\mapsto\zeta^{(r)}(s|\Delta_\alpha)$ is analytic near $s=0$, and
\begin{align}
\label{difference of zetas via resolvent 2}
\begin{split}
-\partial_s[\zeta^{(r)}(s|\Delta_\alpha)-\zeta(s|\Delta)]\big|_{s=0}={\rm log}(-({\rm log}\epsilon&+\mathfrak{q}))+\gamma+
\\+&\int_{-\infty}^{-\epsilon}\tilde{q}(\mu)d\mu+{\rm log}\frac{{\rm ctg}\alpha-T(0)}{{\rm ctg}\alpha-T(-\epsilon)}
\end{split}
\end{align}
for sufficiently small $\epsilon>0$. Note that the left-hand side of (\ref{difference of zetas via resolvent 2}) is independent of $\epsilon$ while the right-hand side is real-analytic in $\epsilon\in (0,+\infty)$. Then the right-hand side is independent of $\epsilon\in (0,+\infty)$. Sending $\epsilon$ to infinity and taking into account that $T(0)=0$, we arrive at
\begin{align}
\label{difference zeta}
-\partial_s[\zeta^{(r)}(s|\Delta_\alpha)-\zeta(s|\Delta)]\big|_{s=0}={\rm log}(-4\pi{\rm ctg}\alpha)+\gamma.
\end{align}
Comparison formula (\ref{comparison formula determinants}) follows from (\ref{difference zeta}) and definition (\ref{reg zeta}) of the regularized determinant ${\rm det}^{(r)}\Delta_\alpha$. Formula (\ref{comparison formula determinants coordepend}) follows from (\ref{comparison formula determinants}) and Lemma \ref{parametrization comparing lemma}.

\section{Explicit formulas for Robin mass}
\label{sec Robin}
\subsection{Derivation of formula (\ref{Robin mass explicit})}
Choose a canonical basis $\{a_i,b_j\}_{i,j=1}^g$ of cycles; let $\vec{v}=(v_1,\dots,v_g)^t$ be the basis of Abelian differentials on $X$ normalized with respect to $\{a_i,b_j\}_{i,j=1}^g$, and let $\mathbb{B}$ be the matrix of $b$-periods of $X$ (see, e.g., \cite{GH}, p. 231). Denote by $\mathcal{A}(\mathcal{D})$ the Abel transform of the divisor $\mathcal{D}$ with the basepoint $Q$; then $\mathcal{A}(y-x)=\int_{x}^y \vec{v}$. Let $\mathcal{K}$ denote the vector of Riemann constants, associated with the same basepoint $Q$.

From now on, we assume that $L$ obeys (\ref{bundle restrictions}). Then $L\simeq{\bm\triangle}\otimes\chi$, where ${\bm\triangle}$ is the `basic' spinor bundle obeying $\mathcal{A}({\bm\triangle})=-\mathcal{K}$ while $\chi$ is a unitary holomorphic line bundle (see Example 2.3 on pp.28,29, \cite{Fay}).

The {\it Szeg\"o kernel} $S$ is defined as a section of $L\hat{\otimes}KL^{-1}$ given by
\begin{equation}
\label{Szego Fay}
S(x,y)=-4\pi h(y)\partial_{y}G(x,y)
\end{equation}
(see p.25, \cite{Fay}). The reversal of (\ref{Szego Fay}) is 
\begin{equation}
\label{Szego inverse Fay}
G(x,y)=\frac{1}{4\pi^2}\int_{X}\mathcal{S}(x,z)h^{-1}(z)\overline{\mathcal{S}(y,z)}\hat{d}z
\end{equation}
(see (2.6), \cite{Fay}), where $\hat{d}z=d\overline{z}\wedge dz/2i$. In view of conditions (\ref{bundle restrictions}), the Szeg\"o kernel is independent of the choice of metrics and coincides with integral kernel of the operator $-\pi\overline{\partial}^{-1}$. Moreover, it is biholomorphic outside the diagonal $x=y$ and obeys the asymptotics 
\begin{equation}
\label{Szego Fay near diagonal}
S(x,y)=\frac{1}{y-x}+O(1) \qquad (|x-y|\to 0)
\end{equation}
(see p. 25-29, \cite{Fay}). In addition, the following explicit formula for the Szeg\"o kernel holds
\begin{equation}
\label{Szego explicit}
\mathcal{S}(x,y)=\frac{\theta[\chi](\mathcal{A}(y-x))}{\theta[\chi](0)E(x,y)},
\end{equation}
where $E(x,y)$ is the prime-form of $X$ and $\theta[\chi](\cdot)$ is the theta-function (defined in \cite{Fay}, (1.9)).

Formulas (\ref{Szego inverse Fay}) and (\ref{Szego explicit}) provide an explicit expression for the Green function $G$. To obtain explicit formula (\ref{Robin mass explicit}) for $m(y)$, one needs a regularization of the (diverging at $x=y$) integral in the right-hand side of (\ref{Szego inverse Fay}). To this end, let us introduce the symmetric real-valued function
\begin{equation}
\label{Phi func}
(x,y)\mapsto\Phi(x,y)=-\frac{1}{4\pi}{\rm log}\left[\frac{F(x,y)}{\rho(x)\rho(y)}\right],
\end{equation}
on $X\times X$, where $F$ is given by (\ref{F section}). Due to the asymptotics (see \cite{Fay}, (1.3))
$$\frac{E(x,y)}{x-y}=1+O(|x-y|^2),$$
formulas (\ref{Phi func}) and (\ref{F section}) imply
\begin{equation}
\label{asymp of Phi}
\Phi(x,y)=-\frac{1}{2\pi}{\rm log}d(x,y)+O(|x-y|), \quad 4\pi\partial_{y}\Phi(x,y)=\frac{1}{x-y}+O(1) \qquad (|x-y|\to 0).
\end{equation}
Then
\begin{equation}
\label{reg 1}
m(y)=\lim_{x\to y}\big(h(y)G(x,y)-\Phi(x,y)\big).
\end{equation}

Let $x\ne y$ and let $X_\epsilon(x,y)$ be the domain obtained by removing $\epsilon$-neighborhoods (in the metric $\rho^{-2}|dz|^2$) of $x$ and $y$. In view of the Stokes theorem and (\ref{asymp of Phi}), we have
\begin{align}
\label{phi integral 1}
\begin{split}
\int\limits_{X_\epsilon(x,y)}4[\partial_{\overline{z}}\partial_z\Phi(x,z)\,\overline{\Phi(z,y)}+\partial_z\Phi(x,z)\,\overline{\partial_z\Phi(z,y)}]\hat{d}z&=\\
=\int\limits_{\partial X_\epsilon(x,y)}\Big[\frac{\overline{\Phi(x,y)}}{x-z}+O(1)+O\big(\big|{\rm log}|z-y|\big|\big)&\Big]\frac{dz}{2\pi i}=\Phi(x,y)+o(1).
\end{split}
\end{align}
Since the prime-form $E$ is biholomorphic, we have $\partial_{\overline{z}}\partial_z{\rm log}|E(x,z)|^2=0$ ($x\ne z$). Then formulas (\ref{Phi func}) and (\ref{F section}) imply
\begin{align}
\label{Laplace of Phi}
4\partial_{\overline{z}}\partial_z\Phi(x,z)=\partial_{\overline{z}}\partial_z\Big[\frac{1}{\pi}{\rm log}\rho(z)-\frac{1}{2}(\vec{A}-\overline{\vec{A}})^t (\Im\mathbb{B})^{-1}(\vec{A}-\overline{\vec{A}})\Big]=\frac{K(z)}{4\pi\rho^2(z)}+\overline{\vec{v}(z)}^t(\Im\mathbb{B})^{-1}\vec{v}(z)
\end{align}
for $z\ne x$, where $$\vec{A}=\int_z^x \vec{v}, \quad \partial_{\overline{z}}\vec{A}=0, \quad \partial_{z}\vec{A}=-\vec{v}(z)$$
and $K=4\rho^2\partial\overline{\partial}{\rm log}\rho$ is the Gaussian curvature of the metric $\rho^{-2}|dz|^2$. Now passing to the limit $\epsilon\to 0$ in (\ref{phi integral 1}) yields
\begin{align}
\label{phi integral}
\begin{split}
\Phi(x,y)=\int\limits_{X}4\partial_z\Phi(x,z)\,\overline{\partial_z\Phi(z,y)}\hat{d}z+\int\limits_{X}\Big[\frac{K(z)}{4\pi\rho^2(z)}+\overline{\vec{v}(z)}^t(\Im\mathbb{B})^{-1}\vec{v}(z)\Big]\overline{\Phi(z,y)}\hat{d}z.
\end{split}
\end{align}
Substituting (\ref{Szego inverse Fay}) and (\ref{phi integral}) into (\ref{reg 1}), one obtains
\begin{align}
\label{reg 2}
\begin{split}
m(y)=\lim_{x\to y}\left[\frac{1}{4\pi^2}\int_{X}\Big[\mathcal{S}(x,z)\overline{\mathcal{S}(y,z)}\frac{h(y)}{h(z)}-16\pi^2\partial_z\Phi(x,z)\,\overline{\partial_z\Phi(z,y)}\Big]\hat{d}z\right]-\\
-\int\limits_{X}\left[\frac{K(z)}{4\pi\rho^2(z)}+\overline{\vec{v}(z)}^t(\Im\mathbb{B})^{-1}\vec{v}(z)\right]\overline{\Phi(z,y)}\hat{d}z.
\end{split}
\end{align}
In view of asymptotics (\ref{Szego Fay near diagonal}) and (\ref{asymp of Phi}), the section
$$(y,z)\mapsto |\mathcal{S}(y,z)|^2\frac{h(y)}{h(z)}-16\pi^2|\partial_z\Phi(y,z)|^2$$
of $1\hat{\otimes}K\overline{K}$ is integrable in $z\in X$. Therefore, one can interchange passing to the limit and the integration in (\ref{reg 2}). As a result, one arrives at
\begin{align}
\label{m regularization}
\begin{split}
m(y)=\frac{1}{4\pi^2}\int\limits_{X}\Big[|\mathcal{S}(y,z)|^2&\frac{h(y)}{h(z)}-16\pi^2|\partial_z\Phi(y,z)|^2\Big]\hat{d}z-\\
-&\int\limits_{X}\left[\frac{K(z)}{4\pi\rho^2(z)}+\overline{\vec{v}(z)}^t(\Im\mathbb{B})^{-1}\vec{v}(z)\right]\overline{\Phi(z,y)}\hat{d}z.
\end{split}
\end{align}
To derive (\ref{Robin mass explicit}), it remains to substitute (\ref{Szego explicit}), (\ref{Phi func}) and (\ref{F section}) into (\ref{m regularization}).

\subsection{Relation between the Robin masses for conformally equivalent metrics} 
Let $\rho'^{-2}|dz|^2$ and $h'$ and $\rho^{-2}|dz|^2$ and $h$ be two pairs of metrics on the Riemann surface $X$ and the holmorphic line bundle $L$, respectively. Denote by $G'$ and $m'$ the Green function and the Robin mass for the Laplacian $\Delta'$ associated with the surface $(X,\rho'^{-2})$ and the hermitian bundle $(L,h')$. 

Suppose that $L$ satisfies (\ref{bundle restrictions}). Then Szeg\"o kernel (\ref{Szego Fay}) is independent of the choice of conformal metrics and formulas (\ref{Szego Fay}) and (\ref{Szego inverse Fay}) remain valid after replacing $G,h$ by $G',h'$. Then
\begin{align*}
G'(x,y)=\frac{1}{4\pi^2}\int_{X}\mathcal{S}(x,z)h'^{-1}(z)\overline{\mathcal{S}(y,z)}\hat{d}z=\frac{-1}{\pi}\int_{X}\frac{h}{h'}(z)\partial_{z}G(x,z)\overline{\mathcal{S}(y,z)}\hat{d}z.
\end{align*}
Since $\mathcal{S}(y,z)$ is biholomorphic outside $y=z$, we have 
\begin{align*}
G'(x,y)=\frac{1}{\pi}\int_{X}\Big[\partial_{z}\frac{h}{h'}\Big](z)G(x,z)\overline{\mathcal{S}(y,z)}\hat{d}z-\frac{1}{\pi}\int_{X}\partial_{z}\Big[\frac{h}{h'}(z)G(x,z)\overline{\mathcal{S}(y,z)}\Big]\hat{d}z.
\end{align*}
In view of the Stokes theorem and asymptotics (\ref{Szego Fay near diagonal}) and (\ref{Green asymp}), the last integral in the right-hand side is equal to $\pi h(y)h'^{-1}(y)G(x,y)$. Thus,
\begin{equation}
\label{comparing green f conf m 1}
h'(y)G'(x,y)-h(y)G(x,y)=\frac{h'(y)}{\pi}\int_{X}\Big[\partial_{z}\frac{h}{h'}\Big](z)G(x,z)\overline{\mathcal{S}(y,z)}\hat{d}z.
\end{equation}
In view of (\ref{Green asymp}), we have
\begin{align*}
h'(y)G'(x,y)&=-\frac{1}{2\pi}{\rm log}\,\big[|x-y|\rho'^{-1}(y)\big]+m'(y)+o(1),\\
h(y)G(x,y)&=-\frac{1}{2\pi}{\rm log}\,\big[|x-y|\rho^{-1}(y)\big]+ m(y)+o(1)
\end{align*}
as $x\to y$. Then passing to the limit as $x\to y$ in (\ref{comparing green f conf m 1}) yields the comparison formula
\begin{equation}
\label{comparing green f conf m}
\begin{split}
m'(y)-m(y)=&\frac{1}{2\pi}{\rm log}\,\Big[\frac{\rho(y)}{\rho'(y)}\Big]+\frac{h'(y)}{\pi}\int_{X}\Big[\partial_{z}\frac{h}{h'}\Big](z)G(y,z)\overline{\mathcal{S}(y,z)}\hat{d}z=\\
=&\frac{1}{2\pi}{\rm log}\,\Big[\frac{\rho(y)}{\rho'(y)}\Big]-4h'(y)\int_{X}\Big[\partial_{z}\frac{h}{h'}\Big](z)G(y,z)h(z)\partial_{\overline{z}}G(z,y)\hat{d}z
\end{split}
\end{equation}
(cf. p.203, \cite{Nev}).

\subsection{Examples}
\paragraph{The Robin mass for the spinor bundle on the round sphere.} Let $x$ and $x'=1/x$ be the system of holomorphic coordinates on the Riemann sphere $\overline{\mathbb{C}}$ and $L=C=\sqrt{K}$ be the (unique up to isomorphism) spinor bundle on $\overline{\mathbb{C}}$. Then its Szeg\"o kernel is given by 
$S(x,y)=(y-x)^{-1}\sqrt{dxdy}$. Note that the prime-form on $\overline{C}$ is just $E(x,y)=(x-y)/\sqrt{dxdy}$. 

The round metric $\rho^{-2}|dx|^2$ on $\overline{\mathbb{C}}$ is given by $\rho(x)=1+|x|^2$; then its Gaussian curvature is constant $K=4$. The metric in the spinor bundle $C$ is given by $h=\rho$. The Green function $G$ of the spinor Laplacian $\Delta$ on the sphere $\overline{\mathbb{C}}$ is invariant with respect to rotations. Therefore, the Robin mass $m$ is constant on $\overline{\mathbb{C}}$. 

In contrast to (\ref{Robin mass explicit}), formula (\ref{m regularization}) is still valid for the case $g=0$ and it takes the form
\begin{align*}
4\pi^2 m=&\int\limits_{\mathbb{C}}\Big[|\mathcal{S}(0,z)|^2\frac{h(0)}{h(z)}-16\pi^2|\partial_z\Phi(0,z)|^2-\frac{\pi K}{\rho^2(z)}\overline{\Phi(z,0)}\Big]\hat{d}z=\\
=&\left[\Phi(x,y)=\frac{-1}{4\pi}{\rm log}\left[\frac{|x-y|^2}{\rho(x)\rho(y)}\right], \ |z|^2=t\right]=\pi\int\limits_{0}^{+\infty}\frac{1+{\rm log}[t/(1+t)]}{(1+t)^2}dt=0
\end{align*}
Comparison of the last formula with the explicit expression
$$h(0)G(x,0)=G(x,0)=\frac{1}{4\pi}{\rm log}[1+|x|^{-2}]=-\frac{1}{2\pi}{\rm log}|x|+O(|x|^2)$$
for the spinor Green function $G$ provides a simple cross-check of (\ref{m regularization}).

\paragraph{The Robin masses for spinor bundles on flat tori.} Let $\mathbb{T}$ be the torus $\mathbb{C}/(\mathbb{Z}+\tau\mathbb{Z})$ with $\Im\tau>0$. Let $z\in\mathbb{C}$ be a coordinate of the point $z/(\mathbb{Z}+\tau\mathbb{Z})$ of $\mathbb{T}$. The metric on $\mathbb{T}$ is $|dz|^2$; then the area of $\mathbb{T}$ is $\Im\tau$. 

The sections $f$ of any line bundle $L$ over $\mathbb{T}$ can be considered as a functions on the universal cover $\mathbb{C}$ of $\mathbb{T}$ obeying the quasi-periodicity conditions 
\begin{equation}
\label{quasi periodicity}
f(z+1)=\mathfrak{s}_1(z) f(z), \qquad f(z+\tau)=\mathfrak{s}_\tau(x) f(z),
\end{equation}
where the automorphy factors $\mathfrak{s}_1,\mathfrak{s}_\tau$ are invariant under the cover transformations $\mathbb{Z}+\tau\mathbb{Z}$. There are 4 non-isomorphic spinor bundles $C_{\mathfrak{s}_1,\mathfrak{s}_\tau}$ where $\mathfrak{s}_1,\mathfrak{s}_\tau=\pm 1$. 

The metric of $C_{\mathfrak{s}_1,\mathfrak{s}_\tau}$ is given by $h=1$. The the spinor Laplacians are given by $\Delta=\partial_{z}\partial_{\overline{z}}$ in local coordinates. Note that the kernel of $\Delta$ is non-trivial only for $C=C_{+,+}=1$. The Greens functions for Laplacians on $C_{\mathfrak{s}_1,\mathfrak{s}_\tau}$ are invariant with respect to translations of torus: $G(x,y)=G(x-y)$. Then the the Robin masses corresponding to $C_{\mathfrak{s}_1,\mathfrak{s}_\tau}$ are constant on $\mathbb{T}$. The Green function for $C_{+,+}=1$ is given by 
\begin{equation*}
G(z|\tau)=-\frac{1}{2\pi}{\rm log}\left|\frac{\theta_1(z|\tau)}{\theta'_1(0|\tau)}\right|+\frac{(\Im z)^2}{2\Im\tau}.
\end{equation*}
In view of (\ref{quasi periodicity}), the Green function for $C_{+,-}, C_{-,+}, C_{-,-}$ are given by
\begin{align*}
G_{+,-}(z|\tau)&=G(z|2\tau)-G(z-\tau|2\tau)=\frac{1}{2\pi}{\rm log}\left|\frac{\theta_1(z-\tau|2\tau)}{\theta_1(z|2\tau)}\right|+\frac{\Im (2z-\tau)}{4},\\
G_{-,+}(z|\tau)&=G\Big(\frac{z}{2}\Big|\frac{\tau}{2}\Big)-G\Big(\frac{z-1}{2}\Big|\frac{\tau}{2}\Big)=\frac{1}{2\pi}{\rm log}\left|\frac{\theta_1(\frac{z-1}{2}|\frac{\tau}{2})}{\theta_1(\frac{z}{2}|\frac{\tau}{2})}\right|,\\
G_{-,-}(z|\tau)&=G\Big(\frac{z}{2}\Big|\tau\Big)-G\Big(\frac{z-1}{2}\Big|\tau\Big)-G\Big(\frac{z-\tau}{2}\Big|\tau\Big)+G\Big(\frac{z-1-\tau}{2}\Big|\tau\Big)=\\
&\qquad\qquad\qquad\qquad\qquad\qquad\qquad=\frac{1}{2\pi}{\rm log}\left|\frac{\theta_1(\frac{z-1}{2}|\tau)\theta_1(\frac{z-\tau}{2}|\tau)}{\theta_1(\frac{z}{2}|\tau)\theta_1(\frac{z-1-\tau}{2}|\tau)}\right|,
\end{align*}
respectively. Therefore,
\begin{align*}
m_{+,+}&=0, & m_{-,-}=\frac{1}{2\pi}{\rm log}\left|\frac{2\theta_1(\frac{1}{2}|\tau)\theta_1(\frac{\tau}{2}|\tau)}{\theta_1(0|\tau)\theta_1(\frac{1+\tau}{2}|\tau)}\right|,\\
m_{+,-}&=\frac{1}{2\pi}{\rm log}\left|\frac{\theta_1(\tau|2\tau)}{\theta'_1(0|2\tau)}\right|-\frac{\Im\tau}{4}, & m_{-,+}=\frac{1}{2\pi}{\rm log}\left|\frac{2\theta_1(\frac{1}{2}|\frac{\tau}{2})}{\theta'_1(0|\frac{\tau}{2})}\right|.
\end{align*}

\section{On Steiner's relation between regularized $\zeta(1|\Delta)$ and the Robin mass}
\label{Steiner sec}
For the case of the trivial bundle $L$, relation (\ref{ADM via Robin}) between regularized $\zeta(1|\Delta)$ (given by (\ref{reg zeta 1})) and the Robin mass is proved in Proposition 2, \cite{Steiner}. In this section, we provide a straightforward generalization of this result to the case of arbitrary $L$. For simplicity, we assume that ${\rm Ker}\,\Delta=\{0\}$ (if ${\rm Ker}\,\Delta\ne\{0\}$, the zero modes are excluded from the definition of $\zeta(s|\Delta)$ and $\mathrm{K}(x,y,t)$ in the formulas below should be replaced by $\mathrm{K}(x,y,t)-B(x,y)$, where $B$ is the Bergman kernel defined after (\ref{bergman})). 

Let $x,y,t\mapsto \mathrm{K}(x,y,t)$ be the heat kernel associated with the equation $(\partial_t+\Delta)u(x,t)=0$. According to Theorem 2.5 and formulas (2.24) and (2.25) on p.34, \cite{Fay}, $\mathrm{K}(x,y,t)$ admits the asymptotics
\begin{equation}
\label{heat asymp}
\mathrm{K}(x,y,t)h(y)=\frac{{\rm exp}\big(-r^{2}/4t\big)}{4\pi t}\big[1+\psi_0(x,y)\big]+\Psi_1(x,y,t),
\end{equation}
where $r=d(x,y)$ and
\begin{align*}
\psi_0(x,y)=[\partial_y{\rm log}h](y)(y-x)+&\big[(\partial_y{\rm log}h)^2-\partial^2_{y}h/2h\big](y-x)^2+\\
+\big[K(y)/3+&R(y)\big]|y-x|^2/4\rho^{2}(y)+O(|x-y|^3)=O(r),
\end{align*}
while the remainder $\Psi_1(x,y,t)$ is bounded uniformly in $x,y\in X$ and $t\ge 0$. Here $R=-2\rho^2\overline{\partial}\partial{\rm log}h$ is the scalar curvature of $h$.

The kernels $x,y\mapsto G^{(s)}(x,y)$ of the operators $\Delta^{-s}$ are related to the heat kernel via
\begin{align*}
h(y)G^{(s)}(x,y)=\frac{h(y)}{\Gamma(s)}\int_0^{+\infty}\mathrm{K}(x,y,t)t^{s-1}dt&=\\
=\frac{1+\psi_0(x,y)}{4\pi \Gamma(s)}&\int_0^{1}{\rm exp}\big(-r^2/4t\big)t^{s-2}dt+\mathcal{K}_1(s,x,y),
\end{align*}
where
\begin{align*}
\mathcal{K}_1(s,x,y)=\frac{h(y)}{\Gamma(s)}\Big(\int_1^{+\infty}\mathrm{K}(x,y,t)t^{s-1}dt+\int_0^{1}\Psi_1(x,y,t)t^{s-1}dt\Big).
\end{align*}
In view of (\ref{heat asymp}), $G^{(s)}(x,y)$ is well defined for any $x,y\in X$ for $\Re s>1$ and for any $s\in\mathbb{C}$ for $x\ne y$. Note that $\mathcal{K}_1(s,x,y)$ is bounded in $x,y\in X$ and analytic in $s$ near $s=1$. The integral $\int_0^{1}{\rm exp}\big(-r^2/4t\big)t^{s-2}dt$ is analytic with respect to $r$,$s$ and is well-defined for any $s\in\mathbb{C}$ and $\Re r^2>0$. Denote $u:=r^2/4t$. For $r>0$ and $1/2<\Re s<1$, we have
\begin{align}
\label{gjfgjkk}
\begin{split}
\int\limits_0^{1}{\rm exp}\big(-r^2/4t\big)t^{s-2}dt=(r^2/4)^{s-1}\Big(\int\limits_{0}^{+\infty}-\int\limits_{0}^{r^2/4}\Big)e^{-u}u^{-s}du=\\
=(r^2/4)^{s-1}\Big(\Gamma(1-s)-\int_{0}^{r^2/4}\big(e^{-u}-1\big)u^{-s}du-\int_{0}^{r^2/4}u^{-s}du\Big)=\\
=(r^2/4)^{s-1}\Gamma(1-s)-\frac{1}{1-s}-(r^2/4)^{s-1}\int_{0}^{r^2/4}\big(e^{-u}-1\big)u^{-s}du.
\end{split}
\end{align}
Now note that the right-hand side of (\ref{gjfgjkk}) is well-defined and analytic in a punctured neighborhood of $s=1$ (even if $\Re s>1$) for $r>0$. If $\Re s>1$, then the left-hand side (and, therefore, the right-hand side) of (\ref{gjfgjkk}) is continuous for $r\ge 0$. As a corollary, we have
\begin{equation}
\label{kernel delta s expansion}
h(y)G^{(s)}(x,y)=\frac{1}{4\pi \Gamma(s)}\Big[(r^2/4)^{s-1}\Gamma(1-s)-\frac{1}{1-s}\Big]+\mathcal{K}_0(s,x,y)+\mathcal{K}_1(s,x,y),
\end{equation}
where 
$$\mathcal{K}_0(s,x,y)=\frac{\psi_0(x,y)}{4\pi \Gamma(s)}\int_0^{1}{\rm exp}\big(-r^2/4t\big)t^{s-2}dt-\frac{(r^2/4)^{s-1}}{4\pi \Gamma(s)}\int_{0}^{r^2/4}\big(e^{-u}-1\big)u^{-s}du.$$
Here
\begin{itemize}
\item the equality is valid for $r>0$ and any $s$ close to $s=1$;
\item for $\Re s>1$, the left-hand side is continuous at $x=y$;
\item for $x\ne y$, the left-hand side is analytic in $s\in\mathbb{C}$;
\item $\mathcal{K}_1(s,x,y)$ is analytic in $s$ near $s=1$ for any $x,y\in X$ and is continuous in $x,y\in X$;
\item $\mathcal{K}_0(s,x,y)$ is analytic in $s\in\mathbb{C}$ for $x\ne y$ and, due to (\ref{gjfgjkk}), $\mathcal{K}_0(s,x,y)\to 0$ as $r\to 0$ uniformly with respect to $s$ close to $s=1$ (including $s=1$).
\end{itemize}
Let $\zeta^{(r)}(1|\Delta)$ is given by (\ref{reg zeta 1}). In view of (\ref{kernel delta s expansion}) and the identity
$$\lim\limits_{s\to 1}\frac{1-1/\Gamma(s)}{1-s}=\gamma,$$
we have
\begin{equation}
\label{ADM}
\begin{split}
\zeta^{(r)}(1|\Delta)=\lim_{\substack{s\to 1 \\ \Re s>1}}\int_{X}\lim_{x\to y}\Big(h(x)G^{(s)}(y,x)-\frac{1}{4\pi(s-1)}\Big)dS_\rho(y)=\\
=\int_{X}(\mathcal{K}_1(1,y,y)+\gamma/4\pi)dS_\rho(y).
\end{split}
\end{equation}
At the same time, we have
\begin{equation}
\label{Robin}
\begin{split}
m(y)=\lim_{x\to y}\big[h(y)G(x,y)+\frac{1}{2\pi}{\rm log}r\big]=\lim_{x\to y}\Big[\lim_{\substack{s\to 1 \\ \Re s>1}}[h(y)G^{(s)}(x,y)]+\frac{1}{2\pi}{\rm log}r\Big]=\\
=\lim_{x\to y}\Big[\lim_{\substack{s\to 1 \\ \Re s>1}}\Big[\frac{1}{4\pi \Gamma(s)}\Big[(r^2/4)^{s-1}\Gamma(1-s)-\frac{1}{1-s}\Big]\Big]+\frac{1}{2\pi}{\rm log}r\Big]+\mathcal{K}_1(y,y,1)=\\
=\frac{2{\rm log}2-\gamma}{4\pi}+\mathcal{K}_1(y,y,1)
\end{split}
\end{equation}
due to the asymptotics
$$\Gamma(z)-\frac{1}{z}=-\gamma+O(z), \ z\to 0.$$
Comparing (\ref{ADM}) with (\ref{Robin}), one arrives at (\ref{ADM via Robin}).

\section{Evolution of the scalar Robin mass under Ricci flow.} 
\label{Sec Rissi}
\paragraph{Calculation of the scalar Robin mass.} Denote by $m^{(sc)}$ the Robin mass associated with scalar Laplacian $\Delta^{(sc)}=-4\rho^{2}\partial\overline{\partial}$ on $X$. In what follows, we denote by 
$$\langle f\rangle=\frac{1}{{\rm Area}(X;\rho)}\int\limits_{X}f(x)dS_\rho(x)$$
the average value of the function $f$ on $(X,\rho)$. 

Integrating both sides of (\ref{Green Verlinde}) over $X$ and taking into account that the scalar Green function $G^{(sc)}(x,\cdot)$ is $L_2$-orthogonal to constants, we obtain
\begin{align}
\label{average robin sc}
\begin{split}
m^{(sc)}(x)+\langle m^{(sc)}\rangle=-\frac{2}{{\rm Area}(X;\rho)}\int\limits_{X}\Phi(x,y)dS_\rho(y),\\
\langle m^{(sc)}\rangle=-\frac{1}{{\rm Area}(X;\rho)^2}\int\limits_{X}\int\limits_{X}\Phi(x,y)dS_\rho(y)dS_\rho(x),
\end{split}
\end{align}
where $\Phi$ is given by (\ref{Phi func}). Comparing the last two formulas yields
\begin{align}
\label{scalar Robin}
m^{(sc)}(x)=&\frac{1}{{\rm Area}(X;\rho)^2}\int\limits_{X}\int\limits_{X}\Phi(x,y)dS_\rho(y)dS_\rho(x)-\frac{2}{{\rm Area}(X;\rho)}\int\limits_{X}\Phi(x,y)dS_\rho(y).
\end{align}
In addition, from (\ref{Green Verlinde}) and (\ref{Laplace of Phi}) it easily follows that
\begin{equation}
\label{laplacian of robin mass}
\Delta^{(sc)}m^{(sc)}=2\Delta^{(sc)} [G^{(sc)}(x,\cdot)-\Phi(x,\cdot)]=-\frac{2}{{\rm Area}(X,\rho)}+\frac{K}{2\pi}+2\rho^2\overline{\vec{v}}^t(\Im\mathbb{B})^{-1}\vec{v}
\end{equation}
(cf. Proposition 2.3, \cite{Okikiolu} for the case of the Bergman metric).

\paragraph{Evolution of the average Robin mass under Ricci flow: scalar case.} Consider the normalized Ricci flow $t\mapsto \rho_t^{-2}|dz|^2$ of the metrics on $X$,
\begin{equation}
\label{Ricci flow}
\frac{\dot{\rho}_t}{\rho_t}= K_t-\langle K_t\rangle,
\end{equation}
where $K_t=[4\rho^2\partial\overline{\partial}{\rm log}\rho]_t$ is the Gaussian curvature and 
$$\langle K_t\rangle=\frac{1}{A_t}\int\limits_X K_tdS_{\rho}, \qquad A_t={\rm Area}(X;\rho_t).$$ 
It is well known that Ricci flow (\ref{Ricci flow}) preserves the surface area $A_t=A$. In view of the Gauss–Bonnet theorem, we have $A\langle K_t\rangle=2\pi\chi(X)$, where $\chi(X)$ is the Euler characteristic of $X$. As is well known (see \cite{Ham,Chow}), the metric $\rho_t$ converges to the metric of constant curvature $K_\infty=2\pi\chi(X)A^{-1}$ as $t\to+\infty$. 

Denote by $m^{(sc)}_t$ the Robin mass associated with the scalar Laplacian $\Delta^{(sc)}_t=-4\rho_t^{2}\partial\overline{\partial}$ on $X$. Differentiating  both sides of (\ref{average robin sc}) with respect to $t$, we obtain
\begin{equation}
\label{ricci ev 1}
A^2\partial_t\langle m^{(sc)}_t\rangle=\int\limits_{X}\int\limits_{X}\left[2\Phi_t(x,y)\Big[\frac{\dot{\rho}_t(x)}{\rho_t(x)}+\frac{\dot{\rho}_t(y)}{\rho_t(y)}\Big]-\dot{\Phi}_t(x,y)\right]dS_{\rho_t}(y)dS_{\rho_t}(x).
\end{equation}
In view of (\ref{Phi func}) and the fact that the section $F$ (given by (\ref{F section})) in conformally invariant, we have
$$\dot{\Phi}_t(x,y)=\frac{1}{4\pi}\Big[\frac{\dot{\rho}_t(x)}{\rho_t(x)}+\frac{\dot{\rho}_t(y)}{\rho_t(y)}\Big].$$
Then
$$\int\limits_{X}\int\limits_{X}\dot{\Phi}_t(x,y)dS_{\rho_t}(y)dS_{\rho_t}(x)=\frac{1}{2\pi}\int\limits_{X}\int\limits_{X}\frac{\dot{\rho}_t(x)}{\rho_t(x)}dS_{\rho_t}(y)dS_{\rho_t}(x)=-\frac{A_t\dot{A}_t}{4\pi}=0$$
and formulas (\ref{ricci ev 1}), (\ref{Ricci flow}), (\ref{average robin sc}) and the symmetry of $\Phi(x,y)=\Phi(y,x)$ imply
\begin{align*}
\frac{1}{2}\partial_t\langle m^{(sc)}_t\rangle&=2\int\limits_{X}(K_t(x)-K_{\infty})\int\limits_{X}\Phi_t(x,y)\frac{dS_{\rho_t}(y)dS_{\rho_t}(x)}{A^2}=\\
=&\int\limits_{X}\big(K_{\infty}-K_t(x)\big)\big(m^{(sc)}_t(x)+\langle m^{(sc)}_t\rangle\big)\frac{dS_{\rho_t}(x)}{A}=K_{\infty}\langle m^{(sc)}_t\rangle-\int\limits_{X}K_tm^{(sc)}_t\frac{dS_{\rho_t}}{A}.
\end{align*}
Due to (\ref{laplacian of robin mass}), we have
\begin{align}
\label{Ricci evolution of RM}
\Big(\partial_t-2K_{\infty}+\frac{8\pi}{A}\Big)\langle m^{(sc)}_t\rangle=\frac{8\pi}{A}\int\limits_{X}\rho_t^2[\overline{\vec{v}}^t(\Im\mathbb{B})^{-1}\vec{v}]m^{(sc)}_t dS_{\rho_t}-\frac{4\pi}{A}\int\limits_{X}\Delta^{(sc)}m^{(sc)}\cdot m^{(sc)}_t dS_{\rho_t}.
\end{align}
If $X$ is the Riemann sphere $S$ then the first integral in the right-hand side is absent and $K_\infty=4\pi/A$. Then the last formula can be rewritten as
\begin{align*}
\partial_t\langle m^{(sc)}_t\rangle=-\frac{4\pi}{A}\int\limits_{S}\Delta^{(sc)}m^{(sc)}_t\cdot m^{(sc)}_t dS_{\rho_t}.
\end{align*}
Since the scalar Laplacian is non-negative and ${\rm Ker}\,\Delta^{(sc)}=\{{\rm const}\}$, we have 
$$\partial_t\langle m^{(sc)}_t\rangle\le 0,$$ 
where the equality is attained only if $m^{(sc)}_t$ is constant on $S$. Thus, if the area of $S$ is constant, then $\langle m^{(sc)}\rangle$ (as a functional on the space of smooth metrics with given area on $S$) attains its global minimum at the metric of constant curvature. Indeed, let $\Delta_{(sc),0}$ be the laplacian on $S$ corresponding to any metric $\rho^{-2}_0|dz|^2$ of non-constant curvature. Introduce the the family of laplacians $t\mapsto \Delta_{(sc),t}$ ($t\ge 0$) corresponding to Ricci flow (\ref{Ricci flow}). Then the function $t\mapsto\langle m^{(sc),t}\rangle$ decreases. Since the Ricci flow (\ref{Ricci flow}) converges to the metric $\rho^{-2}_\infty|dz|^2$ of constant curvature on $S$, formula (\ref{scalar Robin}) implies $\langle m^{(sc),t}\rangle\to \langle m^{(sc),\infty}\rangle$, where $\Delta^{(sc),\infty}$ is the laplacian corresponding to the metrics $\rho^{-2}_\infty|dz|^2$ of constant curvature. In particular, we obtain $\langle m^{(sc),0}\rangle\ge \langle m^{(sc),\infty}\rangle$. 

Thus, by means of (\ref{ADM via Robin}), we recover the well-known result of Morpurgo (see \cite{Mor}, formula (4)) stating that $\zeta^{(r)}(1|\Delta^{(sc)})$ (as a functional on the space of smooth metrics with given area on $S$) attains minimum at the metric of constant curvature on $S$.  

\subsection*{Statements and Declarations}
\paragraph{Competing Interests.} The authors declare that there are no conflicts of interests and competing interests related to the present work.
\paragraph{Data Availability Statement.} Data sharing not applicable to this article as no datasets were generated or analysed during the current study.
\paragraph{Funding.} The research of the first author was supported by NSERC. The research of the second author was supported by Fonds de recherche du Qu\'ebec.

\paragraph{Acknowledgments.} The authors thank the anonymous referee for several valuable improvements of the text.

\end{document}